\definecolor{dblue}{rgb}{0,0,0.70}
\newtheorem{theorem}
{Theorem}[section]
\newtheorem*{theorem*}{Theorem}
\newaliascnt{lemma}{theorem}
\newtheorem{lemma}[lemma]{Lemma}
\newtheorem*{lemma*}{Lemma}
\newtheorem{claim}[theorem]{Claim}
\newaliascnt{fact}{theorem}
\newaliascnt{proposition}{theorem}
\newtheorem{proposition}[proposition]{Proposition}
\newaliascnt{corollary}{theorem}
\newtheorem{corollary}[corollary]{Corollary}
\theoremstyle{remark}
\newaliascnt{remark}{theorem}
\newtheorem{remark}[remark]{Remark}
\newaliascnt{question}{theorem}
\newtheorem{question}[question]{Question}
\newaliascnt{conjecture}{theorem}
\newtheorem*{question*}{Question}
\newaliascnt{definition}{theorem}
\newtheorem{definition}[definition]{Definition}
\newaliascnt{example}{theorem}
\renewcommand{\restriction}{\mathbin\upharpoonright}
\newcommand{\axiom}[1]{\mathsf{#1}}
\newcommand{\ZFC}{\axiom{ZFC}}
\newcommand{\AC}{\axiom{AC}}
\newcommand{\AD}{\axiom{AD}}
\newcommand{\DC}{\axiom{DC}}
\newcommand{\ZF}{\axiom{ZF}}
\newcommand{\BPI}{\axiom{BPI}}
\newcommand{\GCH}{\axiom{GCH}}
\newcommand{\IS}{\axiom{IS}}
\newcommand{\HS}{\axiom{HS}}
\newcommand{\KWP}{\axiom{KWP}}
\newcommand{\WO}{\axiom{WO}}
\DeclareMathOperator{\dom}{dom}
\DeclareMathOperator{\rng}{rng}
\DeclareMathOperator{\supp}{supp}
\DeclareMathOperator{\rank}{rank}
\DeclareMathOperator{\sym}{sym}
\DeclareMathOperator{\fix}{fix}
\DeclareMathOperator{\fin}{fin}
\DeclareMathOperator{\id}{id}
\DeclareMathOperator{\aut}{Aut}
\DeclareMathOperator{\Add}{Add}
\DeclareMathOperator{\stab}{stab}
\newcommand{\forces}{\mathrel{\Vdash}}
\newcommand{\gaut}[1]{{\textstyle\int_{#1}}}
\newcommand{\power}{\mathcal{P}}
\newcommand{\PP}{\mathbb P}
\newcommand{\QQ}{\mathbb Q}
\newcommand{\RR}{\mathbb R}
\newcommand{\cF}{\mathcal F}
\newcommand{\cG}{\mathcal G}
\newcommand{\cL}{\mathcal L}
\newcommand{\sF}{\mathscr F}
\newcommand{\sG}{\mathscr G}
\newcommand{\tup}[1]{\langle#1\rangle}
\author{Asaf Karagila}
\thanks{The author was supported by the Royal Society grant no.~NF170989 and by UKRI Future Leaders Fellowship MR/T021705/1.}
\email{karagila@math.huji.ac.il}
\urladdr{http://karagila.org}
\address{School of Mathematics,
University of East Anglia.
Norwich, NR4~7TJ, UK
}
\date{July 23, 2021}
\subjclass[2020]{Primary 03E25; Secondary 03E35}
\keywords{axiom of choice, symmetric extensions, iterations of symmetric extensions, Morris model, cardinal structure, nonabelian cohomology, structural failures}
\title
{Iterated failures of choice}
\begin{document}
\begin{abstract}
  We combine several folklore observations to provide a working framework for iterating constructions which contradict the axiom of choice. We use this to define a model in which any kind of structural failure must fail with a proper class of counterexamples. For example, the rational numbers have a proper class of non-isomorphic algebraic closures, every partial order embeds into the cardinals of the model, every set is the image of a Dedekind-finite set, every weak choice axiom of the form $\AC_X^Y$ fails with a proper class of counterexamples, every field has a vector space with two linearly independent vectors but without endomorphisms that are not scalar multiplication, etc.
\end{abstract}
\maketitle              
\section{Introduction}
The axiom of choice is a staple of modern mathematics. It was born in controversy, but its incredible usefulness for taming infnite objects and G\"odel's proof that assuming the axiom of choice is not going to introduce a contradiction to mathematics (unless one existed to begin with) made sure that the axiom was accepted into the standard mathematical discourse.

Nevertheless, any and all applications of the axiom of choice are immediately questioned. Is it necessary to use the axiom of choice to prove that every commutative ring has a maximal ideal? Is it necessary for proving Tychonoff's theorem, perhaps just for Hausdorff spaces it is not needed? Etc. These questions are very common, mainly because the axiom of choice asserts the existence of an object, but it does not provide us with means of constructing such an object. Therefore, what we really ask is not whether or not the axiom of choice is necessary, but to what extent are we relying on abstract mathematical rules, and what can we construct by hand in our mathematical work.

These motivations kept the embers of research into weak choice principles and their necessity for various mathematical proofs burning.\footnote{Even when it sometimes seemed that the coals have cooled off completely.} Many of the questions that have been answered so far have been answered by exhibiting a proof that the axiom of choice follows from a certain mathematical theorem (e.g.\ if every vector space has a basis, then the axiom of choice holds \cite{Blass:1984}) or by constructing a model of $\ZF$, a mathematical universe, in which the theorem fails (e.g.\ L\"auchli's construction of a vector space over the rationals which does not have a basis \cite{Lauchli:1963}).

For a simple answer, these will suffice. But we are not mathematicians because we are looking for simple answers. We are mathematicians because we are looking for better questions with more interesting answers. And so, one naturally wants to ask, ``how bad can it get?'' In the basic constructions we usually utilise set theoretic technology to add a counterexample to the universe and we stop there. But these universes will have a reasonable well-behaved extension in which the axiom of choice holds, and therefore the counterexamples disappear. Even in the more general case, where we do add a proper class of counterexamples, we often do so in a way that the universe can still be salvaged, corrected, helped back into a universe of $\ZFC$ in which everything is back to normal again. It is worth specifying that the meaning of well-behaved here is ``with the same ordinals'', since the class of ordinals is in some sense a spine that classifies different models of set theory, so a well-behaved extension is one where the ordinals are unchanged.

The standard set theoretic technology is based on Paul Cohen's method of forcing and it is called \textit{symmetric extensions}. In \cite{Karagila:Iterations} we developed, for the first time, a general framework for iterating symmetric extensions. One of the intended uses for this method was for answering ``how bad can it get'' by showing that ``it can get very \textit{very} bad indeed''. Nevertheless, one key argument was missing from this construction.

In this paper we bridge that gap. We show that by combining several folklore observations we can iterate these sort of structural failures which arise outside of set theory, and we can make the universe very bad. Namely, we cannot restore the axiom of choice, and the universe cannot be saved. In order to restore the axiom of choice, we must add ordinals. This idea is not new, it was developed by Morris in his Ph.D.\ thesis \cite{Morris:PhD}, and while the proof was never published outside of the dissertation itself, it was rediscovered by the author in \cite{Karagila:Morris}. When giving a talk about that very proof in Paris in early November 2019, the stars were aligned correctly, and all these observations came together to create what became a basis for this general technique.

We will give a new proof for a slightly better result than that of Morris, all the while correcting a gap in our previous attempt to do so. We will also define the notion of a ``structural failure of choice'' and show that this method allows us to construct a model in which every structural failure that can be forced occurs within the model, and it does so with a proper class of non-isomorphic counterexamples.

This implies, for example, that for a set $X$ (considered as a discrete space) the nonabelian cohomology $H^1(X,G)$ is not a set for any reasonably interesting group; that each field has a proper class of non-isomorphic vector spaces, all of which admit no linear operators except scalar multiplication; there is a proper class of non-isomorphic algebraic closures of the rational numbers. Indeed, for any set $X$ there is a family of algebraical closures of cardinality $X$; and much more. We also include an interesting historical survey of the result ``every partial order can be embedded into the cardinal structure''. It turned out that this result was also proved in the 1980s by Forti and Honsell, but their paper never received any citations and was quite literally lost in the dunes of time.

Such a mathematical universe, in which all structural failures are witnessed on a proper class, shows the sharp divide between ``concrete'' and ``structural'' mathematics. We can obtain all these structural failures while preserving a well-ordering of the real numbers, or any pre-determined set. In particular that means that the well-ordering of the real numbers, or the Banach--Tarski paradox, or the existence of a Hamel basis for $\ell^\infty$, none of these have any bearing on the generality of structure theories.

This is a point of interest, as many times we do care about a specific field, a specific vector space, a specific object of specific size. And while this will clearly not be enough to imply the axiom of choice, it does show us, for example, how classical analysis is very removed from category theoretic approaches which talk about ``all the groups'' or ``all the $k$-vector spaces'', etc.

\subsection{Structure of this paper}
We begin this paper by covering the technical preliminaries for the construction, these are symmetric extensions and the very basic theory of their iterations. In \autoref{sec:folk} we discuss the folklore results necessary for this work: preservation of initial segments of the universe; creating symmetric ``copies'' of structures from the universe; and the cardinalities of these new ``copies''.

In \autoref{sec:morris} we will describe a slightly improved Morris model and correct a mistake from \cite{Karagila:Morris}, and in \autoref{sec:pandemonium} we will discuss the construction of various ``local'' counterexamples and how to apply the folklore results in order to obtain a general framework for failures.

We finish the main part of the paper with directions for further research, both in set theory, as well as in understanding the standard mathematical structures in a choiceless setting.

\subsection*{Acknowledgements}
The author is indebted to Boban Veli\v{c}kovi\'c for his invitation to Paris and for the opportunity to talk about the Morris model, as well as to the other set theorists of the IMJ--PRG, University of Paris, and Matteo Viale for asking many hard hitting questions and pushing the author to make the needed connections to obtain this general framework. We also thank Jeremy Rickard for bringing to our attention at the right place and the right time a MathOverflow question by the author about the number of algebraic closures a field can have (although it is not answered in full here),\footnote{See the MathOverflow question \href{https://mathoverflow.net/q/325756}{``How many algebraic closures can a field have?''}} and Andr\'es E.~Caicedo, Jason Chen, Hanul Jeon, and David Roberts (who also gave helpful remarks) for suggesting some ideas of global failures of the axiom of choice. And finally to David Asper\'o for making many small suggestions to improve the overall quality of this text.
\section{Preliminaries}

We will use the standard notation of $V_\alpha$ to denote the von Neumann hierarchy, where $\alpha$ is an ordinal. For the readers who are not interested in the set theoretic parts of this paper the key thing to remember is that every $V_\alpha$ is a set, and that every set $x$ appears in some $V_\alpha$. We will also use Greek letters to denote ordinals, with the notable exception of $\pi,\sigma,\tau$ that will be used to denote automorphisms.

Our treatment of forcing is standard, you can find the basics in standard sources such as \cite{Jech:ST}. We say that $\PP$ is a notion of forcing if $\PP$ is a preordered set with a maximum, $1_\PP$. We write $q\leq p$ to mean that $q$ is a stronger condition. Two conditions are compatible if they have a common extension, and are incompatible otherwise. If $\{\dot x_i\mid i\in I\}$ is a collection of $\PP$-names, we write $\{\dot x_i\mid i\in I\}^\bullet$ to denote ``the obvious $\PP$-name'' it creates, namely $\{\tup{1_\PP,\dot x_i}\mid i\in I\}^\bullet$. This notation extends to ordered pairs, sequences, etc. This also somewhat simplifies the canonical names for ground model sets, as we can now write $\check x=\{\check y\mid y\in x\}^\bullet$.

We will use the following group theoretic notion of the wreath product. If $A$ and $B$ are two sets, and $G\subseteq S_A$, $H\subseteq S_B$ (where $S_X$ is the group of all permutations of $X$), then $G\wr H$ is a group of permutations of $A\times B$: $\pi\in G\wr H$, if there are $\pi^*\in G$ and $\tup{\pi_a\mid a\in A}$ such that $\pi_a\in H$, and $\pi(a,b)=\tup{\pi^*(a),\pi_a(b)}$.

\subsection{Symmetric extensions and (some) iterations thereof}

To violate the axiom of choice we cannot use forcing on its own, as forcing preserves the axiom of choice (if present in the ground model). In order to violate the axiom of choice we need to pass from the generic extension, $V[G]$, to an inner model $M$ where it fails. This method is based on the Fraenkel--Mostowski--Specker method of permutation models in the context of $\ZF$ with Atoms.\footnote{See \cite{Jech:AC} for details on the method.} In the case of forcing, a symmetric extension identifies an appropriate class of names which define a model of $\ZF$ where the axiom of choice may fail. Iterating symmetric extensions was developed by the author in \cite{Karagila:Iterations}, and while the full theory is not trivial at all, we will only need a fraction of it here. We start by defining symmetric extensions.

Let $\PP$ be a notion of forcing, and let $\pi$ be an automorphism of $\PP$. We can extend $\pi$ to act on $\PP$-names by recursion,\[\pi\dot x=\{\tup{\pi p,\pi\dot y}\mid\tup{p,\dot y}\in\dot x\}.\]
As the forcing relation is defined from the order of $\PP$, the following lemma should not be surprising. The proof is straightforward and can be found, for example, as Lemma~14.37 in \cite{Jech:ST}.
\begin{lemma*}[The Symmetry Lemma]
$p\forces\varphi(\dot x)\iff\pi p\forces\varphi(\pi\dot x)$.\qed
\end{lemma*}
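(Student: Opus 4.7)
The plan is to prove the Symmetry Lemma by induction on the complexity of $\varphi$, reducing eventually to the atomic case, which itself will require an inner induction on the ranks of the names involved. A key preliminary observation is that it suffices to establish just the forward direction ``$p\forces\varphi(\dot x)\Rightarrow\pi p\forces\varphi(\pi\dot x)$'', since applying this to $\pi^{-1}$ (also an automorphism of $\PP$) and to $\pi\dot x$ in place of $\dot x$ yields the reverse implication. A second preliminary is that because $\pi$ is an automorphism of the preorder, it preserves both the ordering and compatibility, hence it preserves density below any condition; and on names, the recursive definition $\pi\dot x=\{\tup{\pi p,\pi\dot y}\mid\tup{p,\dot y}\in\dot x\}$ gives a rank-preserving bijection of the class of $\PP$-names to itself.

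For the atomic case I would work with the standard recursive definition of the forcing relation for ``$\dot x\in\dot y$'' and ``$\dot x=\dot y$'' (the version that uses density and does not go through a generic filter), and argue by simultaneous induction on $\max(\rank\dot x,\rank\dot y)$. Unfolding the definition, the statement $p\forces\dot x\in\dot y$ is a Boolean combination of statements of the form ``the set of $q\leq p$ such that for some $\tup{r,\dot z}\in\dot y$ with $q\leq r$ one has $q\forces\dot x=\dot z$ is dense below $p$'', and similarly for equality. Apply $\pi$ throughout: conditions $q\leq p$ map bijectively to conditions $\pi q\leq \pi p$; pairs $\tup{r,\dot z}\in\dot y$ correspond to pairs $\tup{\pi r,\pi\dot z}\in\pi\dot y$; and the inner forcing statements $q\forces\dot x=\dot z$ translate, by the inductive hypothesis (applied at strictly lower rank since $\rank\dot z<\rank\dot y$), to $\pi q\forces\pi\dot x=\pi\dot z$. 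Density below $p$ translates to density below $\pi p$ because $\pi$ is an order automorphism. Putting these observations together gives the atomic case in one direction, hence both.

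The inductive step for connectives is routine: for $\neg\psi$ use that $p\forces\neg\psi$ is defined as ``no $q\leq p$ forces $\psi$'', and apply $\pi$ to conditions; for $\psi_1\wedge\psi_2$ the result is immediate from the inductive hypotheses on each conjunct; for $\exists v\,\psi(v,\dot x)$ use that the class of $\PP$-names is closed under $\pi$ and $\pi^{-1}$, so witnessing names on one side correspond bijectively to witnessing names on the other.

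The only real obstacle is the atomic case, specifically the need to recognise that the nested inductive structure of the definition of $\forces$ on atomic formulas requires an inner induction on name rank, and to keep straight that $\pi$ commutes with the set-theoretic operations (membership of pairs in a name, density below a condition, order comparisons) that appear in that definition. Once the atomic case is settled, the rest of the proof is a mechanical induction on formula complexity.
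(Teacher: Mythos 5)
Your proof follows the standard argument that the paper itself gestures at by citing Jech's Lemma~14.37, so the overall strategy matches. The reduction to one implication via $\pi^{-1}$, the inner induction on atomic formulas, and the outer induction on formula complexity are all as expected. One small technical slip: in the atomic case you take the induction variable to be $\max(\rank\dot x,\rank\dot y)$, but when $p\forces\dot x\in\dot y$ is unfolded to statements $q\forces\dot x=\dot z$ for $\tup{r,\dot z}\in\dot y$, you only obtain $\rank\dot z<\rank\dot y$; if $\rank\dot x\geq\rank\dot y$ the maximum does not strictly decrease, so your stated inductive hypothesis does not apply to the pair $(\dot x,\dot z)$. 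The fix is routine and is exactly the well-founded relation that already makes the recursive definition of $\forces$ on atomic formulas legitimate --- for instance the lexicographic order on the pair $\tup{\max(\rank\dot x,\rank\dot y),\ \min(\rank\dot x,\rank\dot y)}$, which does strictly decrease at each unfolding step in both the membership-to-equality and equality-to-membership directions. With that replacement the rest of your argument goes through unchanged.
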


Fix $\sG\subseteq\aut(\PP)$, and for a name $\dot x$ denote the group $\{\pi\in\sG\mid\pi\dot x=\dot x\}$ by $\sym_\sG(\dot x)$. This is sometimes called the \textit{stabiliser} of $\dot x$. We want to have a way of saying that a name is stable under ``most'' of the automorphisms (in $\sG$). And so we need a suitable notion of a filter.

We say that $\sF$ is a \textit{filter of subgroups of $\sG$} if it is a filter on the lattice of subgroups. That is, it is a non-empty family of subgroups of $\sG$ which is closed under finite intersections and supergroups. We say that $\sF$ is \textit{normal} if for every $H\in\sF$ and every $\pi\in\sG$, $\pi H\pi^{-1}\in\sG$. We will say that $\tup{\PP,\sG,\sF}$ is a \textit{symmetric system} if $\PP$ is a notion of forcing, $\sG$ is a group of automorphisms of $\PP$, and $\sF$ is a normal filter of subgroups of $\sG$. It is easier to assume only the case where $\sF$ is a base for a normal filter, since this is preserved when extending the universe (and perhaps adding new subgroups to $\sG$) and we will do so implicitly.

Fix a symmetric system $\tup{\PP,\sG,\sF}$. If for a $\PP$-name, $\dot x$, we have $\sym_\sG(\dot x)\in\sF$, we say that $\dot x$ is \textit{$\sF$-symmetric}, and if the condition holds hereditarily for names in $\dot x$, we say that it is \textit{hereditarily $\sF$-symmetric}. We write $\HS_\sF$ to denote the class of hereditarily $\sF$-symmetric names.
\begin{theorem*}
Let $\tup{\PP,\sG,\sF}$ be a symmetric system, and let $G\subseteq\PP$ be a $V$-generic filter. The class $M=\HS_\sF^G=\{\dot x^G\mid\dot x\in\HS_\sF\}$ is a transitive class model of $\ZF$ inside $V[G]$ which contains $V$.
\end{theorem*}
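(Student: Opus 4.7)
The plan is to verify, in roughly increasing order of difficulty, that $M$ is transitive, that $V\subseteq M$, and that each axiom of $\ZF$ holds in $M$. For transitivity, if $\dot x\in\HS_\sF$ and $a\in\dot x^G$, then $a=\dot y^G$ for some $\dot y$ appearing in $\dot x$; the hereditary clause in the definition of $\HS_\sF$ says precisely that such a $\dot y$ lies in $\HS_\sF$, so $a\in M$. For $V\subseteq M$, an induction on rank shows $\check x\in\HS_\sF$, because every $\pi\in\sG$ acts trivially on $\check x$, so $\sym_\sG(\check x)=\sG\in\sF$.

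The bookkeeping axioms are straightforward. Extensionality and Foundation are free from transitivity. Pairing, Union, and Infinity are handled by observing that if $\dot x,\dot y\in\HS_\sF$, then $\{\dot x,\dot y\}^\bullet$, the canonical $\PP$-name for $\bigcup\dot x$, and $\check\omega$ all lie in $\HS_\sF$, with stabilisers containing $\sym_\sG(\dot x)\cap\sym_\sG(\dot y)$, $\sym_\sG(\dot x)$, and $\sG$ respectively, all of which are in $\sF$ because $\sF$ is a filter of subgroups. For Power Set, fix $\dot x\in\HS_\sF$ and observe that any subset of $\dot x^G$ lying in $M$ has an HS-name equivalent to one of the form $\dot y'=\{\tup{q,\dot z}\mid \dot z\in\dom(\dot x),\,q\in\PP,\,q\forces\dot z\in\dot y\}$; the rank of such $\dot y'$ is bounded by an ordinal depending only on $\dot x$ and $\PP$. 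Choosing $\alpha$ past this bound, the name $\{\dot y\in V_\alpha\cap\HS_\sF\}^\bullet$ is itself in $\HS_\sF$, because automorphisms preserve name-rank and (by normality of $\sF$) also permute $\HS_\sF$ setwise; applying Separation in $M$ then recovers $\power^M(\dot x^G)$.

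The main obstacle is Replacement, with Separation as a simpler variant. The essential tool is the symmetric forcing relation $\forces^{\HS_\sF}$, defined by setting $p\forces^{\HS_\sF}\varphi(\dot x_1,\dots,\dot x_n)$ to mean $p\forces\varphi^{M}(\dot x_1,\dots,\dot x_n)$ for $\dot x_i\in\HS_\sF$. The key lemma is the analogue of the Symmetry Lemma, $p\forces^{\HS_\sF}\varphi(\vec{\dot x})\iff\pi p\forces^{\HS_\sF}\varphi(\pi\vec{\dot x})$ for $\pi\in\sG$, which reduces to the usual Symmetry Lemma once one checks that $\pi$ permutes $\HS_\sF$ bijectively (using $\sym_\sG(\pi\dot x)=\pi\sym_\sG(\dot x)\pi^{-1}$ and normality of $\sF$).

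Now suppose $\dot x\in\HS_\sF$ and $p\forces^{\HS_\sF}\forall u\in\dot x\,\exists v\,\varphi(u,v)$. For each $\dot y\in\dom(\dot x)$ and each condition $q\leq p$, there exist $r\leq q$ and $\dot z\in\HS_\sF$ with $r\forces^{\HS_\sF}\varphi(\dot y,\dot z)$. Applying the Reflection Principle in $V$ uniformly over the set of pairs $\tup{\dot y,q}$ with $\dot y\in\dom(\dot x)$ and $q\in\PP$, we obtain a single ordinal $\alpha$ such that whenever a witness $\dot z$ exists, one can be chosen inside $V_\alpha$. Then $\dot r=\{\dot z\in V_\alpha\cap\HS_\sF\}^\bullet$ is in $\HS_\sF$ with stabiliser $\sG$ (by the same rank-and-class-preservation argument used for Power Set), and $p$ forces that the $\varphi$-image of $\dot x^G$ in $M$ is contained in $\dot r^G$; Separation in $M$ then finishes the job. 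Separation itself is obtained by the same construction applied inside $\dom(\dot x)$ rather than a reflected $V_\alpha$. The main subtlety throughout is to keep the witnessing names inside $\HS_\sF$, which is precisely what the symmetric form of the Symmetry Lemma delivers.
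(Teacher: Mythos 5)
Your proof is correct and follows the standard route — it is essentially the argument of Lemma~15.51 in Jech's \emph{Set Theory}, which is exactly what the paper cites for this theorem: transitivity and $V\subseteq M$ by direct inspection, Pairing/Union/Infinity by exhibiting explicit $\HS_\sF$-names with the right stabilisers, and Separation/Replacement/Power Set via the relativised forcing relation together with a rank-bounding argument and the key observation that $\{\dot y\in V_\alpha\cap\HS_\sF\}^\bullet$ has full stabiliser $\sG$ because $\sG$ preserves name rank and (by normality of $\sF$) permutes $\HS_\sF$. Two small loose ends worth tightening: you invoke Separation inside the Power Set argument but only introduce it afterwards as ``a simpler variant'' of Replacement, so the verification should be ordered Separation, then Power Set, then Replacement; and the appeal to ``Reflection'' in the Replacement step is really Collection (bounding the ranks of minimal witnesses) — either works, but one should also record the density argument showing a witness of bounded rank is met by $G$.
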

This model, $M$, is called a \textit{symmetric extension}. The forcing relation relativises to $\HS_\sF$, namely $p\forces^\HS\varphi(\dot x)$ when $\dot x\in\HS_\sF$ and $p\forces\varphi^\HS(\dot x)$. The usual truth lemma holds for $\forces^\HS$ and the Symmetry Lemma holds as well, assuming $\pi\in\sG$. The proof of the theorem can be found as Lemma~15.51 in \cite{Jech:ST}.

The next step, after taking one symmetric extension, is to take many. This can be done simultaneously by a product of symmetric systems (which is simply a symmetric extension given by the product), or more generally by an iteration.\footnote{A product is a type of iteration also in the ``standard'' context of forcing.} We will not cover the whole apparatus for iterating symmetric extensions in this paper, but give a very informal account of the idea behind it.

If $\tup{\QQ_0,\sG_0,\sF_0}$ is a symmetric system, and $M_0$ is the symmetric extension it defines after fixing some $V$-generic filter $G_0\subseteq\QQ_0$, we want to take a symmetric extension of $M_0$. Say $\tup{\QQ_1,\sG_1,\sF_1}$ is the second symmetric system in $M_0$. By the definition of $M_0$ there is a name $\tup{\dot\QQ_1,\dot\sG_1,\dot\sF_1}^\bullet\in\HS_{\sF_0}$ which is interpreted as the symmetric system, and there is some condition $p\in G_0$ forcing that this is a name of a symmetric system. Our goal is to identify a class of $\QQ_0\ast\dot\QQ_1$-names which will predict the second symmetric extension, as well as understand the conditions necessary for this process to continue in a coherent way.

So when is a $\QQ_0\ast\dot\QQ_1$-name going to be interpreted in this intermediate model? It has to project to a $\QQ_0$-name which is in $\HS_{\sF_0}$ and be forced to be a $\dot\QQ_1$-name that satisfies the property of being in $\HS_{\sF_1}^\bullet$. In particular, that means there is a group $H_0\in\sF_0$ and a name for a group $\dot H_1$ forced to be in $\dot\sF_1$, such that automorphisms in these groups preserve the name at each step.

We can use this to derive a direct definition which looks a bit like that of a symmetric extension. We first observe the following: if $\dot x$ is a $\QQ_0$-name and $p\forces``\dot x$ is a $\dot\QQ_1$-name'', then if $\pi$ is an automorphism, $\pi p\forces``\pi\dot x$ is a $\pi\dot\QQ_1$-name''. This leads us to the following definition, which is a necessary condition for the apparatus to run smoothly.
\begin{definition}
Let $\PP$ be a forcing, $\pi\in\aut(\PP)$ and $\dot A$ a $\PP$-name. We say that $\pi$ \textit{respects} $\dot A$ if $1\forces\pi\dot A=\dot A$. If $\dot A$ has an implicit structure (e.g.\ it is a name for a forcing or a symmetric system) then we also implicitly require that the structure is respected.
\end{definition}
If every $\pi\in\sG_0$ respects the name for $\dot\QQ_1$, then $\tup{q_0,\dot q_1}\mapsto\tup{\pi q_0,\pi\dot q_1}$ is indeed an automorphism of $\QQ_0\ast\dot\QQ_1$. Moreover, if $\dot\pi$ is a name for an automorphism of $\dot\QQ_1$, then $\tup{q_0,\dot q_1}\mapsto\tup{q_0,\dot\pi\dot q_1}$ is also an automorphism of $\QQ_0\ast\dot\QQ_1$.\footnote{Here $\dot\pi\dot q_1$ is a name for a suitable condition that is interpreted, at least below $q_0$, as the result of applying $\dot\pi$ to $\dot q_1$.}

We can therefore combine $\sG_0$ and $\dot\sG_1$ to an automorphism group of $\QQ_0\ast\dot\QQ_1$. To simplify our statements in this section, we will set up the context: $\tup{\QQ_0,\sG_0,\sF_0}$ is a symmetric system and $\tup{\dot\QQ_1,\dot\sG_1,\dot\sF_1}^\bullet\in\HS_{\sF_0}$ is a name forced to be a symmetric system which is also respected by $\sG_0$.

\begin{definition}
If $\tup{\pi_0,\dot\pi_1}$ is a pair such that $\pi_0\in\sG_0$ and $\forces_{\QQ_0}\dot\pi_1\in\dot\sG_1$, then we define the automorphism $\gaut{\tup{\pi_0,\dot\pi_1}}$ of $\QQ_0\ast\dot\QQ_1$ as follows:
\[\gaut{\tup{\pi_0,\dot\pi_1}}\tup{q_0,\dot q_1}=\tup{\pi_0 q_0,\pi_0(\dot\pi_1\dot q_1)}=\tup{\pi_0,\pi_0(\dot\pi_1)(\pi_0\dot q_1)}.\]
We let $\cG_1=\sG_0\ast\dot\sG_1$ denote the group of all such automorphisms, and we call it the \textit{generic semi-direct product}.
\end{definition}

Next we need to handle the filters of groups. If $\dot x$ is a name which is to identify a set in the iterated symmetric extension, we essentially have $H_0\in\sF_0$ and some name $\dot H_1$ forced to be in $\dot\sF_1$ such that whenever $\pi_0\in H_0$ and $\forces_{\QQ_0}\dot\pi_1\in\dot H_1$ (which we will abbreviate as $\forces\tup{\pi_0,\dot\pi_1}\in\tup{H_0,\dot H_1}$), then $\gaut{\tup{\pi_0,\dot\pi_1}}$ respects $\dot x$. We will write $\cF_2=\sF_0\ast\dot\sF_1$ to denote the collection of these pairs which we call \textit{$\cF_2$-supports}.

\begin{definition}
$\tup{H_0,\dot H_1}$ is an $\cF_2$-support of $\dot x$ if whenever $p\forces\tup{\pi_0,\dot\pi_1}\in\tup{H_0,\dot H_1}$, then $p\forces\gaut{\tup{\pi_0,\dot\pi_1}}\dot x=\dot x$. In this case we say that $\dot x$ is \textit{$\cF_2$-respected}, and if the property holds hereditarily we say that it is \textit{hereditarily $\cF_2$-respected}. We denote by $\IS_2$ the class of all hereditarily $\cF_2$-respected names.\footnote{We skipped $\cF_1$ and $\IS_1$ since that is a single-step symmetric extension.}
\end{definition}
\begin{remark}
We will not use the $\gaut{\vec\pi}$ notation in this paper, since we are going to see how it is possible to stitch ``local'' constructions into an iteration. The notation becomes very useful when dealing with arbitrary iterations and it is used extensively in \cite{Karagila:Iterations}, so it is worth introducing, to help and ease the reader into the subject.
\end{remark}

We can now extend this to any length with a finite support iteration.
\begin{definition}
Suppose that $\tup{\dot\QQ_\alpha,\dot\sG_\alpha,\dot\sF_\alpha\mid\alpha<\delta}$ and $\tup{\PP_\alpha,\cG_\alpha,\cF_\alpha\mid\alpha\leq\delta}$ are sequences satisfying the following:
\begin{enumerate}
\item $\PP_\alpha$ is the finite support iteration of $\dot\QQ_\alpha$.
\item $\forces_\alpha\tup{\dot\QQ_\alpha,\dot\sG_\alpha,\dot\sF_\alpha}^\bullet$ is a symmetric system.
\item $\cG_0=\{\id\}$, $\cG_{\alpha+1}=\cG_\alpha\ast\dot\sG_\alpha$, and if $\alpha$ is a limit, then $\cG_\alpha$ is the direct limit of $\cG_\beta$ for $\beta<\alpha$.
\item $\cF_0=\{\cG_0\}$, $\cF_{\alpha+1}=\cF_\alpha\ast\dot\sF_\alpha$, and if $\alpha$ is limit, then $\cF_\alpha$ is the direct limit of $\cF_\beta$ for $\beta<\alpha$, i.e.\ the collection of sequences $\tup{\dot H_\beta\mid\beta<\alpha}$ such that $\forces_\beta\dot H_\beta\in\dot\sF_\beta$ and for all but finitely many $\beta<\alpha$, $\forces_\beta\dot H_\beta=\dot\sG_\beta$.
\item For each $\alpha$, $\tup{\dot\QQ_\alpha,\dot\sG_\alpha,\dot\sF_\alpha}^\bullet$ is hereditarily $\cF_\alpha$-respected, and the name itself is respected by any automorphism in $\cG_\alpha$.
\end{enumerate}
Then the class $\IS_\alpha$ of hereditarily $\cF_\alpha$-respected names, for $\alpha\leq\delta$, is the class of $\PP_\alpha$-names which predicts the iteration of symmetric extensions. Moreover, $\IS_{\alpha+1}$ will be a symmetric extension of $\IS_\alpha$ by $\tup{\dot\QQ_\alpha,\dot\sG_\alpha,\dot\sF_\alpha}^\bullet$.
\end{definition}
We also have a relativised forcing relation, $\forces^\IS$, which is defined similarly as $\forces^\HS$.
\begin{theorem}[Theorem~9.1 in \cite{Karagila:Iterations}]\label{thm:preservation}
Let $\tup{\dot\QQ_\alpha,\dot\sG_\alpha,\dot\sF_\alpha\mid\alpha<\delta}$ be a symmetric iteration such that for all $\alpha$, $\forces_\alpha``\dot\sG_\alpha$ witnesses the homogeneity of $\dot\QQ_\alpha$'' and let $\eta$ be some ordinal such that there is $\alpha<\delta$ such that for all $\beta\in[\alpha,\delta)$,\[\forces_\beta^\IS\tup{\dot\QQ_\beta,\dot\sG_\beta,\dot\sF_\beta}^\bullet\text{ does not add new sets of rank }<\check\eta,\] and let $G\subseteq\PP_\delta$ be a $V$-generic filter. Then $V_\eta^{\IS_\alpha^{G\restriction\alpha}}=V_\eta^{\IS_\delta^G}$.\qed
\end{theorem}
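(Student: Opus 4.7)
The containment $V_\eta^{\IS_\alpha^{G\restriction\alpha}}\subseteq V_\eta^{\IS_\delta^G}$ is immediate, since the iterated symmetric classes are increasing in $\beta$. So the task reduces to showing that no set of rank $<\eta$ is added at any stage in $[\alpha,\delta]$, which I would do by transfinite induction on $\beta\in[\alpha,\delta]$ with induction hypothesis $V_\eta^{\IS_\beta^{G\restriction\beta}}=V_\eta^{\IS_\alpha^{G\restriction\alpha}}$.

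The successor step is essentially the hypothesis: if the identity holds at $\beta<\delta$, then passing from $\IS_\beta$ to $\IS_{\beta+1}$ is a single symmetric extension by $\tup{\dot\QQ_\beta,\dot\sG_\beta,\dot\sF_\beta}^\bullet$, which by assumption is forced (in $\IS_\beta^{G\restriction\beta}$) to add no new sets of rank $<\eta$. So the equality propagates.

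The limit step is the main obstacle, and it is where both finite support on the filter side and the homogeneity assumption are used. Suppose $\beta\in(\alpha,\delta]$ is a limit ordinal and the equality holds strictly below $\beta$. Take $x\in V_\eta^{\IS_\beta^{G\restriction\beta}}$ with a name $\dot x\in\IS_\beta$. I would proceed by a secondary induction on $\rank(x)<\eta$: each element $y\in x$ has strictly smaller rank, hence by the secondary induction lies in $V_\eta^{\IS_\alpha^{G\restriction\alpha}}$ and thus has a name in $\IS_\alpha$. It remains to produce an $\IS_\alpha$-name for $x$ itself.

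By definition of $\cF_\beta$ at a limit, any $\cF_\beta$-support of $\dot x$ is a sequence $\tup{\dot H_{\gamma'}\mid\gamma'<\beta}$ with $\dot H_{\gamma'}=\dot\sG_{\gamma'}$ for all but finitely many $\gamma'$. Hence there is $\gamma<\beta$, which I can arrange to be $\ge\alpha$, such that every automorphism in $\cG_\beta$ acting trivially on coordinates $<\gamma$ respects $\dot x$. Now for each candidate name $\dot y\in\IS_\alpha\subseteq\IS_\gamma$, I claim the truth value of ``$\dot y\in\dot x$'' in $\IS_\beta^{G\restriction\beta}$ is already decided by $G\restriction\gamma$: given conditions $p,q\in\PP_\beta$ with $p\restriction\gamma=q\restriction\gamma$, the homogeneity witnessed by each $\dot\sG_{\gamma'}$ for $\gamma'\in[\gamma,\beta)$ (combined coordinatewise, with only finitely many nontrivial components) yields an automorphism $\vec\pi\in\cG_\beta$ trivial below $\gamma$ such that $\vec\pi p$ is compatible with $q$; by the choice of $\gamma$, $\vec\pi$ fixes $\dot x$, and it fixes $\dot y$ because $\dot y$ is a $\PP_\gamma$-name. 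The Symmetry Lemma for $\forces^\IS$ then gives that $p\forces^\IS\dot y\in\dot x$ iff $q\forces^\IS\dot y\in\dot x$, so membership in $x$ is computable inside $\IS_\gamma^{G\restriction\gamma}$. Therefore $x\in V_\eta^{\IS_\gamma^{G\restriction\gamma}}$, and by the primary inductive hypothesis this equals $V_\eta^{\IS_\alpha^{G\restriction\alpha}}$. The subtlety here is entirely in the homogeneity-plus-support reduction producing $\vec\pi$; everything else is routine bookkeeping within the iterated symmetric framework.
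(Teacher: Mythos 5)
Your overall plan is the right one and, as far as one can tell from the present paper (which only cites the result from [Karagila:Iterations] with a \qed and gives no proof), it matches the expected structure: the inclusion $V_\eta^{\IS_\alpha}\subseteq V_\eta^{\IS_\delta}$ is trivial, the successor step is the hypothesis, and the limit step uses finite support to find $\gamma$ above which the $\cF_\beta$-support of $\dot x$ is trivial and then uses homogeneity to show $\dot x$ is equivalent to a $\PP_\gamma$-name. Two points deserve to be flagged, though, because they are where essentially all of the technical content lives and your write-up asserts rather than establishes them.

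First, the single phrase ``combined coordinatewise, with only finitely many nontrivial components'' is doing the real work, and it is not a routine coordinate-by-coordinate assembly. Because each $\dot\QQ_{\gamma'}$ and $\dot\sG_{\gamma'}$ is only a \emph{name}, and because the condition $p(\gamma')$ is itself a name whose interpretation depends on the generic below $\gamma'$, the automorphism $\dot\pi_{\gamma'}$ you need cannot in general be read off from $p(\gamma')$ and $q(\gamma')$; different conditions below $\gamma'$ may require different homogeneity witnesses, and you must mix over a maximal antichain to produce a single name $\dot\pi_{\gamma'}$ that is forced to lie in $\dot\sG_{\gamma'}$ and does the job everywhere. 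One also has to check that the resulting sequence $\vec\pi$ of mixed names is a legitimate member of $\cG_\beta$ (i.e.\ has the right finite support and the forced group membership), which is a nontrivial closure property of the generic semi-direct product. Stating ``homogeneity yields $\vec\pi$'' hides exactly the lemma that the iteration of homogeneous symmetric systems is homogeneous over any tail.

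Second, the inference from ``the truth value of $\dot y\in\dot x$ depends only on $G\restriction\gamma$'' to ``$x\in V_\eta^{\IS_\gamma^{G\restriction\gamma}}$'' is not automatic. You must actually produce a $\PP_\gamma$-name $\dot x'$ (say $\dot x'=\{\tup{p\restriction\gamma,\dot y}\mid \dot y\in\IS_\gamma,\ p\forces^\IS_\beta\dot y\in\dot x\}$), verify $\dot x'^{G\restriction\gamma}=x$, and — crucially — verify $\dot x'\in\IS_\gamma$, i.e.\ exhibit an $\cF_\gamma$-support for $\dot x'$. The natural candidate is the truncation below $\gamma$ of the $\cF_\beta$-support of $\dot x$; checking that this works again uses that any $\vec\sigma\in\cG_\gamma$ extends by the identity to an element of $\cG_\beta$ which, since the support of $\dot x$ is trivial above $\gamma$, still respects $\dot x$. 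There is also the implicit compatibility $\IS_\gamma\subseteq\IS_\beta$ and the agreement of $\forces^\IS_\gamma$ with $\forces^\IS_\beta$ on $\PP_\gamma$-conditions and $\IS_\gamma$-names, both of which should at least be noted. Also, the ``iff'' at the end of the limit step is stated a little too strongly; what the homogeneity argument actually gives is that no two conditions agreeing below $\gamma$ can force contradictory answers, which is what you then use. None of these are wrong-direction errors — the plan is sound — but as written the proposal delegates the hard parts to a parenthetical.
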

In other words, if each symmetric extension is homogeneous, and we do not add sets of rank $\eta$ on a tail below $\delta$, then we do not add such sets at the $\delta$th stage either. This is very important, as non-trivial forcing will tend to add Cohen reals at limit steps, or even collapse cardinals if we are not careful about our chain conditions.

Almost as importantly, this means that if we guarantee increasing distributivity and homogeneity, then we may iterate even a class length iteration, while preserving $\ZF$ in the final model. (This is Theorem~9.2 in \cite{Karagila:Iterations}.)
\begin{remark}
  We diverge from \cite{Karagila:Iterations} in the definition of supports, as we do not discuss where the names $\dot H_\alpha$ come from. In this paper we hint that these are $\PP_\alpha$-names, whereas in \cite{Karagila:Iterations} we only require them to be $\PP_\delta$-names. This is fine due to the finite support nature of the iteration, so the two definitions are equivalent. We point out in \cite{Karagila:Iterations} that we utilise mixing over antichains to define the automorphisms and the iteration anyway, and there is something to be gained by allowing $\dot H_\alpha$ to be, in fact, a $\PP_\delta$-name for a member of $\dot\sF_\alpha$. In other words, we are allowed to hold off on choosing our pointwise groups until much later in the iteration.

  This has a certain elegance to it, and it is certainly useful in smoothing out the definition (although causing bumps elsewhere). Nevertheless, we do not really care for this here, since our situation is going to be quite specific.
\end{remark}
\section{Combining folklore results}\label{sec:folk}
\subsection{Rank-preserving forcings}
Call a permutation group $\sG$ of a set $I$ \textit{finitely pacifying} if for every finite sets $E,F,F'\subseteq I$ such that $E\cap F'=\varnothing$ there is some $\pi\in\sG$ such that $\pi\restriction E=\id$ and $\pi''(F\setminus E)\cap F'=\varnothing$.
\begin{lemma}\label{lemma:folk-distrib}
  Let $V$ be a model of $\ZF$, and suppose that $\QQ$ in $V$ is some forcing whose finite products do not add any sets of rank $<\alpha$. Let $\PP$ be the finite support product $\prod_{i\in I}\QQ$. Let $\sG$ be a finitely pacifying permutation group of $I$ applied to $\PP$ by means of permuting the index set, i.e. $\pi\tup{q_i\mid i\in I}=\tup{q_{\pi i}\mid i\in I}$. Finally, let $\sF$ be the filter of subgroups generated by $\{\fix(E)\mid E\in[I]^{<\omega}\}$, where $\fix(E)$ denotes $\{\pi\in\sG\mid\pi\restriction E=\id\}$.

  Then $\forces^\HS\dot V_\alpha=\check V_\alpha$. In other words, no new sets of rank $<\alpha$ are added to the symmetric extension given by $\tup{\PP,\sG,\sF}$.
\end{lemma}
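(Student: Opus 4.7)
The plan is to show $V_\alpha^M = V_\alpha^V$, where $M$ denotes the symmetric extension. Since $V \subseteq M$, the inclusion $\supseteq$ is immediate, so I focus on $\subseteq$ by induction on $\alpha$. Fix $\dot y \in \HS_\sF$ and $p \in G$ with $p \forces \dot y \in \check V_\alpha$; the limit and base cases follow easily from the inductive hypothesis, so the content lies in the successor case, where I may assume inductively that every element of $\dot y^G$ already lies in $V$.

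First I fix a finite $E \subseteq I$ with $\fix(E) \subseteq \sym_\sG(\dot y)$ and set $E' = E \cup \supp(p)$. Let $\PP_{E'} = \{q \in \PP \mid \supp(q) \subseteq E'\}$, a copy of $\QQ^{E'}$ that by hypothesis does not add sets of rank $<\alpha$. Writing $H = G \cap \PP_{E'}$, the plan reduces to proving $\dot y^G \in V[H]$: combined with the hypothesis and absoluteness of rank, this gives $\dot y^G \in V_\alpha^{V[H]} = V_\alpha^V$.

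The technical heart is the following symmetry observation: whenever $q_1, q_2 \leq p$ in $\PP$ satisfy $q_1 \restriction E' = q_2 \restriction E'$, they cannot force contradictory statements about $\dot y$. To establish this, I apply finitely pacifying with fixed set $E'$, $F = \supp(q_1)$, and $F' = \supp(q_2) \setminus E'$, obtaining $\pi \in \sG$ which fixes $E'$ pointwise and sends $\supp(q_1) \setminus E'$ disjointly from $\supp(q_2) \setminus E'$. Then $\pi q_1$ and $q_2$ agree on $E'$ and have disjoint supports outside $E'$, so they are compatible; since $\pi \in \fix(E) \subseteq \sym_\sG(\dot y)$, the Symmetry Lemma transfers any statement about $\dot y$ forced by $q_1$ to the same statement forced by $\pi q_1$, yielding the desired contradiction.

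To convert this into $\dot y^G \in V[H]$, I factor $\PP \cong \PP_{E'} \times \PP_{I \setminus E'}$, write $G = H \times K$, and use that by induction every element of $\dot y^G$ lies in $V$. For each $z \in V$, the observation above forbids two $\PP_{I \setminus E'}$-conditions $u_1, u_2 \leq p \restriction (I \setminus E')$ from forcing ``$\check z \in \dot y$'' and its negation respectively: lifting through any common $s \in H$ below $p \restriction E'$ produces $\PP$-conditions $\tup{s, u_1}$ and $\tup{s, u_2}$ extending $p$ and agreeing on $E'$. Hence $p \restriction (I \setminus E')$ outright decides ``$\check z \in \dot y$'' over $V[H]$, and so $\{z \in V : p \restriction (I \setminus E') \text{ forces } \check z \in \dot y \text{ over } V[H]\}$ lies in $V[H]$ and equals $\dot y^G$. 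The main obstacle I anticipate is the bookkeeping around supports and quotient forcing; once $E'$ is chosen to absorb both $\supp(p)$ and a symmetry support of $\dot y$, finitely pacifying does all the real work.
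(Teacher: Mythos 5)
Your proof is correct and rests on the same essential observation as the paper's: finitely pacifying means that membership in a symmetric name is decided by the coordinates in a finite set. The packaging, however, differs. The paper works purely at the level of names — it replaces $\dot x$ by $\{\tup{p\restriction E,\check y}\mid p\forces\check y\in\dot x\}$, notes that after the pacifying argument this is literally a $\prod_{i\in E}\QQ$-name, and concludes directly from the hypothesis on finite products. You instead factor the forcing as $\PP_{E'}\times\PP_{I\setminus E'}$, pass to the intermediate extension $V[H]$, and argue that $\dot y^G$ is definable there from $p\restriction(I\setminus E')$; the hypothesis is then used to identify $V_\alpha^{V[H]}$ with $V_\alpha^V$. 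This route is slightly more elaborate because it must translate forcing statements between $V$ and $V[H]$, and that is where your write-up is a little loose: from ``$u_1$ forces $\check z\in\dot y$ over $V[H]$'' one cannot lift to $\tup{s,u_1}\forces^V\check z\in\dot y$ for an \emph{arbitrary} $s\in H$; the truth lemma only provides \emph{some} $s_1\in H$, and likewise $s_2$ for $u_2$, and one then takes $s\in H$ below both $s_1$ and $s_2$ before applying the symmetry observation. With that small repair the argument is sound. The paper's name-level reduction sidesteps the product lemma and intermediate model entirely and is a bit cleaner, but the way finitely pacifying and the stabiliser $E$ enter is the same in both.
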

\begin{proof}
  We prove this by induction on the rank of $\dot x$.\footnote{By rank we mean either as a set, or more appropriately as a $\PP$-name.} Suppose that $\dot x\in\HS$ and $\forces^\HS\rank(\dot x)<\check\alpha$. By the induction hypothesis, we may replace $\dot x$ by the following name, \[\{\tup{p,\check y}\mid p\forces\check y\in\dot x\},\] To see that we can indeed do that note that if $p\forces\dot y\in\dot x$, then by the induction hypothesis $p$ must force that $\dot y$ is equal to a ground model element, so by extending $p$ if necessary we may assume that $\dot y$ was already $\check y$ for some $y\in V_\alpha$.

  We also note that if $\pi\in\sym(\dot x)$, then $\pi p\forces\pi\check y\in\pi\dot x$ is just $\pi p\forces\check y\in\dot x$. Therefore, $\pi$ also preserves the new name above. So indeed, we may assume that $\dot x$ was exactly that name, which is in $\HS$ after all. Let $E$ denote some finite set such that $\fix(E)\subseteq\sym(\dot x)$.

  Now let $p$ be any condition in $\PP$ such that $p\forces\check y\in\dot x$. Then, we claim, by the finitely pacifying condition on $\sG$, $p\restriction E\forces\check y\in\dot x$. Indeed, if $q\leq p\restriction E$, let $F$ denote $\supp p$ and $F'=\supp q$, then we can apply the finitely pacifying condition to $E,F,F'$ and obtain $\pi\in\fix(E)$ such that $\pi q$ is compatible with $p$. Therefore $q\forces\check y\in\dot x$ as well.

  This means that we can replace $\dot x$ by $\{\tup{p\restriction E,\check y}\mid p\forces\check y\in\dot x\}$, which is actually a name in the finite product $\prod_{i\in E}\QQ$, and by the assumption on $\QQ$ must be equal to some ground model element as wanted.
\end{proof}

To utilise this lemma we need to show that we can find such $\QQ$. That is not obviously something we can do over any model of $\ZF$. It is not clear, for example, that over Gitik's model \cite{Gitik:1980,Gitik:1985} any forcing is at all $\sigma$-distributive. However, in our context we are iterating symmetric extensions starting from a model of $\ZFC$, which we can use for our advantage.

\begin{theorem}\label{thm:add-k-dist}
Let $V$ be a model of $\ZFC$, let $\PP$ be a symmetric iteration of any ordinal length, and let $G\subseteq\PP$ be a $V$-generic filter. Then there is some regular $\kappa$ such that $\Add(\kappa,1)^V$ satisfies the conditions of \autoref{lemma:folk-distrib} in $\IS^G$ for any $\alpha$.
\end{theorem}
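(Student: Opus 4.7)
The plan is to choose $\kappa$ large enough in $V$ that $\Add(\kappa,1)^V$ retains sufficient closure when viewed from $\IS^G$. Given $\alpha$, I will work in $V$ and pick a regular $\kappa$ with $\kappa>|\PP|$ and with $\kappa$ above the $V$-computable bound on $|V_\alpha|^{V[G]}$ coming from the usual name-counting for $\PP$. Such $\kappa$ exists by $\AC$ in $V$ and the assumption that $\PP$ has set length. With this choice, $\PP$ is $\kappa$-cc in $V$ and each finite product $\Add(\kappa,n)^V$ is $\kappa$-closed in $V$.

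Easton's lemma, applied in $V$, then shows that each $\Add(\kappa,n)^V$ is $\kappa$-distributive in $V[G]$. Since $V[G]\models\AC$ and $|V_\alpha|^{V[G]}<\kappa$, any new subset of $V_\alpha^{V[G]}$ in an $\Add(\kappa,n)^V$-extension of $V[G]$ would transfer, via a $V[G]$-bijection $V_\alpha^{V[G]}\to\lambda$ with $\lambda<\kappa$, to a new ${<}\kappa$-sequence of $V[G]$-elements, which is excluded by distributivity. An easy induction on rank then gives $V_\alpha^{V[G][H]}=V_\alpha^{V[G]}$ whenever $H$ is $V[G]$-generic for $\Add(\kappa,n)^V$.

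It remains to transfer this to the symmetric extension. Since $V_\alpha^{\IS^G}\subseteq V_\alpha^{V[G]}$ already has $V[G]$-cardinality ${<}\kappa$, the same $V[G]$-distributivity argument gives no new subsets of $V_\alpha^{\IS^G}$ in $V[G][H]$, and a fortiori none in the intermediate model $\IS^G[H]\subseteq V[G][H]$. The main obstacle will be the factoring step: a new subset of $V_\alpha^{\IS^G}$ witnessed in $\IS^G[H]$ a priori only lies in $V[G]$, and one must still show that it already lies in $\IS^G$. I expect to handle this via the weak homogeneity of $\Add(\kappa,1)^V$: any hereditarily symmetric $\IS^G$-name for such a set is $1$-forced over $V[G]$ to a specific value, and the $\PP$-symmetries respecting that name then descend to produce a hereditarily symmetric $\PP$-name for this value, placing it in $\IS^G$. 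Equivalently, one may view $\Add(\kappa,1)^V$ as an additional step of the iteration with trivial symmetries on the new coordinate, and argue in the spirit of \autoref{thm:preservation}.
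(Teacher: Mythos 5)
Your first two paragraphs track the paper's argument: choose $\kappa$ regular with $\kappa\geq|\PP|^+$ and with $\kappa>|V_\alpha^{\IS^G}|$ as computed in $V[G]$, so that $\PP$ is $\kappa$-cc in $V$ while $\Add(\kappa,1)^V$ is $\kappa$-closed; Easton's lemma then gives that $\Add(\kappa,1)^V$ (isomorphic to its finite products, as $\kappa$ is regular) is $\kappa$-distributive in $V[G]$.

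The divergence is in the third paragraph, and there you are manufacturing a problem the paper's route never encounters. The paper does not pass through $V[G][H]$. It verifies the hypothesis of \autoref{lemma:folk-distrib} directly inside $\IS^G$ by one absoluteness observation: if $\{D_i\mid i\in V_\alpha^{\IS^G}\}$ is a family \emph{in $\IS^G$} of dense open subsets of $\Add(\kappa,1)^V$, then viewed in $V[G]$ the same family has fewer than $\kappa$ members, so its intersection $D$ is dense in $V[G]$; and since $D\in\IS^G$ and ``$D$ is dense in the poset $\Add(\kappa,1)^V$'' is a bounded statement with parameters in $\IS^G$, $D$ is dense in $\IS^G$. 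Once this distributivity holds \emph{relative to $\IS^G$}, the usual name-deciding argument runs entirely inside $\IS^G$: the dense open sets $D_y=\set{q:q\text{ decides }\check y\in\dot x}$ for $y\in V_\beta^{\IS^G}$ form a family in $\IS^G$ (they are defined via the $\IS^G$-forcing relation from $\dot x$ and $y$), a condition in their dense intersection decides $\dot x$ to be $\set{y:q\forces\check y\in\dot x}$, and that set is definable over $\IS^G$ from $\dot x$, $q$, and $V_\beta^{\IS^G}$, hence lies in $\IS^G$. There is no factoring to do.

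Moreover, the fix you sketch would not work as stated. Weak homogeneity of $\Add(\kappa,1)^V$ only forces $\dot x$ to a fixed value if $\dot x$ is invariant under automorphisms of $\Add(\kappa,1)^V$, and an arbitrary $\Add(\kappa,1)^V$-name in $\IS^G$ carries no such invariance (a name reading off the first bit of the generic is decided differently by different conditions). Likewise, tacking $\Add(\kappa,1)^V$ onto the iteration with a trivial automorphism group does not meet the homogeneity hypothesis of \autoref{thm:preservation}: the trivial group cannot witness homogeneity of a non-trivial forcing. The clean resolution is the absoluteness-of-density observation above, after which the ``factoring step'' dissolves.
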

\begin{proof}
  Let $\kappa$ be any regular cardinal greater or equal to $|\PP|^+$ in $V$. Then, in $V$, $\PP$ satisfies the $\kappa$-c.c.\ while $\Add(\kappa,1)^V$ is $\kappa$-closed. Therefore in the full generic extension by $\PP$, $\Add(\kappa,1)^V$ is still $\kappa$-distributive, and moreover it is isomorphic to any of its finite products. Taking $\kappa$ large enough such that $V[G]\models\kappa>|V_\alpha^{\IS^G}|$ will satisfy our conditions.

  To see that, suppose that $\{D_i\mid i\in V_\alpha^{\IS^G}\}$ is a family of dense open subsets of $\Add(\kappa,1)^V$ which is in $\IS^G$ Then in $V[G]$ this family has size strictly smaller than $\kappa$ and therefore its intersection is dense. Density is absolute to $\IS^G$ and therefore the distributivity holds.
\end{proof}
We can now generalise the notion of \textit{$J$-pacifying} by considering an ideal $J$ on the index set $I$ which contains the finite sets, and requiring that $E,F,F\in J$. We can then talk about the product having $J$-support, and the results will stay the same. For example, if one considers a $\kappa$-support product from $V$ in $V[G]$, then we have a natural ideal associated with the product in $V[G]$ to which we can apply these results.
\subsection{Symmetric copies}
Our second folklore result concerns one of the standard methods in the study of the axiom of choice, at least when symmetric extensions are involved. We often want to add a ``generic copy'' of a ground model structure and use symmetries to control which subsets of the copy are preserved in the symmetric extension, and thus we can call this structure a ``symmetric copy''. One can arrange, for example, a vector space which is not generated by any finite set, but whose proper subspaces are all finitely generated by simply making sure to remove all the subsets which generate proper subspaces that are not finite dimensional.\footnote{See \cite{Karagila:MSc} for an example of this approach. The original result is due to L\"auchli, \cite{Lauchli:1963} in the context of permutation models or the construction in Theorem~10.11 in \cite{Jech:AC} for a modern description of the model.}

Fix a language $\cL$ and let $M$ be a $\cL$-structure. We let $\sG_M$ be an automorphism group of $M$ and we let $\sF_M$ denote a normal filter of subgroups generated by $\{\fix(A)\mid A\in I\}$, where $I$ is some ideal of subsets of $M$. We say that $X$ is stable under $\pi\in\sG_M$ if $\pi``X=X$ and that it is stable under some $H\subseteq\sG_M$ if it is stable under every $\pi\in H$. We let $\stab_{\sG_M}(X)$ denote the largest group under which $X$ is stable, and we will say that $X$ is $\sF_M$-stable if $\stab_{\sG_M}(X)\in\sF_M$. This notion extends naturally to relations on $M$. We will always assume that the stabilisers of all finite subsets of $M$ are in $\sF_M$.
\begin{theorem}\label{thm:sym-copy}
Suppose that $M$ is a $\cL$-structure, $\sG_M$ and $\sF_M$ are as above. Then there is a symmetric extension of $V$ in which there is a symmetric copy of $M$ whose only subsets (and relations) are the copies of $\sF_M$-stable subsets from $V$.
\end{theorem}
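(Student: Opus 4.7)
The plan is to build the symmetric copy via a Cohen-style product forcing indexed by $M$, lifting $\sG_M$ to the forcing by permuting coordinates and augmenting it with a column-wise ``bit-flipping'' action to kill any hereditarily symmetric name that exploits the internal structure of the generic reals. Concretely, take $\PP = \Add(\omega, M)$ with the canonical Cohen names $\dot x_a$; let $\sG$ be the semidirect product of $\sG_M$ with the column bit-flip group $B = \prod_{a \in M} 2^\omega$, acting on $\PP$ via $((\pi, \vec\sigma) p)(a, k) = p(\pi^{-1} a, k) \oplus \sigma_{\pi^{-1} a}(k)$; and let $\sF$ be the filter on $\sG$ generated by $\{(\pi, \vec\sigma) \in \sG : \pi \in \fix_{\sG_M}(E)\}$ for $E \in I$, normal by the hypotheses on $\sF_M$.

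The symmetric copy is $\dot A = \{\dot z_a : a \in M\}^\bullet$, where $\dot z_a$ is a hereditarily symmetric name for the $B$-orbit of $\dot x_a$, together with $\dot R = \{\tup{\dot z_{a_1}, \ldots, \dot z_{a_k}}^\bullet : (a_1, \ldots, a_k) \in R^M\}^\bullet$ for each relation symbol $R \in \cL$. A direct computation gives $(\pi, \vec\sigma) \dot z_a = \dot z_{\pi a}$, so $\dot A, \dot R \in \HS_\sF$; in $V[G]$ the map $a \mapsto \dot z_a^G$ is an $\cL$-isomorphism $M \cong (\dot A^G, \dot R^G, \ldots)$ that does not itself lie in the symmetric extension. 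For the forward direction, if $X \subseteq M$ is $\sF_M$-stable then $\dot X = \{\dot z_a : a \in X\}^\bullet$ has $\sym(\dot X) \supseteq \{(\pi, \vec\sigma) \in \sG : \pi \in \stab_{\sG_M}(X)\} \in \sF$, hence $\dot X \in \HS_\sF$; the same argument handles $\sF_M$-stable relations.

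The substantive direction is the converse: every $\dot Y \in \HS_\sF$ forced to be a subset of $\dot A$ satisfies $\dot Y^G = \{\dot z_a^G : a \in Y\}$ for some $\sF_M$-stable $Y \subseteq M$ in $V$. Fix $E \in I$ with $\{(\pi, \vec\sigma) \in \sG : \pi \in \fix_{\sG_M}(E)\} \subseteq \sym(\dot Y)$ and set $Y := \{a \in M : \dot z_a^G \in \dot Y^G\}$. The aim is to show that the trivial condition decides ``$\dot z_a \in \dot Y$'' for every $a \in M$. Given $p, q$ deciding this oppositely, I choose $\pi \in \fix_{\sG_M}(E)$ that moves $\supp(p) \setminus E$ off $\supp(q) \setminus E$, and $\vec\sigma \in B$ with $\sigma_b(k) = p(b, k) \oplus q(b, k)$ for $b \in E$ and $k$ in the overlap (extending arbitrarily elsewhere); then $(\pi, \vec\sigma) p$ is compatible with $q$, so by the Symmetry Lemma applied to $(\pi, \vec\sigma) \in \sym(\dot Y)$ a common extension forces both assertions, a contradiction. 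Hence $Y \in V$, and $\sF_M$-stability of $Y$ follows from one more application of the Symmetry Lemma at the trivial condition. The main obstacle is the careful design of the augmented group $\sG$ and the orbit names $\dot z_a$ so that $\dot A$, all $\sF_M$-stable copies, and every relevant name remain hereditarily symmetric while the internal-structure pathologies (such as $\{\dot x_a : \dot x_a(0) = 0\}^\bullet$) are simultaneously excluded; verifying this balancing act is the essential content of the lemma.
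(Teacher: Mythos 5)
Your idea of killing the ``internal structure'' of the generic reals via an orbit construction is the right instinct, but the resulting name $\dot z_a$ for the $B$-orbit of $\dot x_a$ is \emph{not} hereditarily $\sF$-symmetric, and this is a fatal gap. The natural name for the orbit is $\{(\id,\vec\tau)\dot x_a \mid \vec\tau\in B\}^\bullet$; its top-level stabiliser is fine (it contains $\{(\pi,\vec\sigma):\pi a=a\}\in\sF$, as you computed), but its \emph{elements} are the individual bit-flipped generics $(\id,\vec\tau)\dot x_a$, and none of these are $\sF$-symmetric. For such an element name, $\sym\bigl((\id,\vec\tau)\dot x_a\bigr)$ does not contain the column bit-flip subgroup $\{(\id,\vec\sigma):\vec\sigma\in B\}$ (a bit-flip with $\sigma_a\neq 0$ genuinely changes the name), and since your filter $\sF$ allows arbitrary $\vec\sigma$, the bit-flip subgroup sits inside every group in $\sF$. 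So no element of $\dot z_a$ is in $\HS_\sF$, hence $\dot z_a\notin\HS_\sF$ and $\dot A\notin\HS_\sF$. Semantically, the same obstruction is visible directly: if some $\dot x_a^G\oplus s$ (with $s\in(2^\omega)^V$) lay in the symmetric extension, so would $\dot x_a^G=(\dot x_a^G\oplus s)\oplus s$, contradicting that the bit-flips are meant to suppress $\dot x_a^G$; and by transitivity of the symmetric extension, a set like $\dot z_a^G$ whose members are absent cannot itself be present.

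The paper's construction resolves the same tension by a different device: instead of a single Cohen real per $m\in M$ that is randomized by bit-flips, it attaches \emph{$\kappa$-many $\kappa$-Cohen subsets} $\dot x_{m,\alpha}$ ($\alpha<\kappa$, $\kappa\geq|M|^+$ regular), sets $\dot a_m=\{\dot x_{m,\alpha}\mid\alpha<\kappa\}^\bullet$, and uses the wreath product $\sG_M\wr S_\kappa$ with the filter generated by $\fix(A\times B)$ for $A$ an $\sF_M$-stable subset of $M$ and $B\in[\kappa]^{<\kappa}$. Because the supports on the $\kappa$-side are small, each $\dot x_{m,\alpha}$ \emph{is} hereditarily symmetric (fix $\{m\}\times\{\alpha\}$), so $\dot a_m$ is genuinely in $\HS$, while the $S_\kappa$-action kills all internal-structure names like $\{\dot a_m\mid \dot x_{m,0}(0)=0\}^\bullet$. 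The homogeneity needed for the converse direction then comes from the $S_\kappa$-coordinate alone (shift the $\kappa$-indices), which is why the paper's Homogeneity Lemma needs no richness from $\sG_M$. This also sidesteps a second, minor issue in your argument: you ask for $\pi\in\fix_{\sG_M}(E)$ moving $\supp p\setminus E$ off $\supp q\setminus E$, but $\sG_M$ may be too poor to do this (it could be trivial); the paper never needs $\sG_M$ to move anything. Finally, the paper takes $\kappa>|M|$ so the forcing is $\kappa$-closed and adds no subsets of $M$, which is used to reduce an arbitrary $\HS$-name for a subset of the copy to a ground-model $R'\subseteq M$ before applying homogeneity; in your $\Add(\omega,M)$ setting that closure is unavailable and would have to be replaced by a direct decision argument, which is possible but is another place where your write-up is thin.
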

This type of theorem was proved many times over from the early advents of symmetric extensions (see for example Plotkin in \cite{Plotkin:1969}, Hodges in \cite{Hodges:1974}, and more recently a very similar theorem in the author's paper with Noah Schweber \cite{Karagila:Schweber}). The idea is simple: for each $m\in M$ add an infinite set of $\kappa$-Cohen subsets, for a suitable $\kappa$, and consider the permutation group $\sG_M\wr S_\kappa$ with the filter generated by $\sF_M\times\sF_\kappa$, here $\sF_\kappa$ is the normal filter of subgroups generated by fixing pointwise a set in $[\kappa]^{<\kappa}$.
\begin{proof}
  Let $\kappa\geq|M|^+$ be a regular cardinal, let $\PP=\Add(\kappa,M\times\kappa)$, let $\sG=\sG_M\wr S_\kappa$, and let $\sF=\{\fix(A\times B)\mid A\text{ is }\sF_M\text{-stable}, B\in[\kappa]^{<\kappa}\}$.

  Recall that $p\in\PP$ is a function whose domain is a subset of $M\times\kappa\times\kappa$ such that $|p|<\kappa$, and that $\pi\in\sG$ is composed of $\pi^*\in\sG_M$ and $\tup{\pi_m\mid m\in M}$ such that $\pi_m\in S_\kappa$, and its action on $\PP$ is the standard action in these constructions: \[\pi p(\pi(m,\alpha),\beta)=\pi p(\pi^*m,\pi_m\alpha,\beta)=p(m,\alpha,\beta).\]

  We define the following names.
  \begin{enumerate}
  \item For $m\in M,\alpha<\kappa$ let $\dot x_{m,\alpha}=\{\tup{p,\check\beta}\mid p(m,\alpha,\beta)=1\}$.
  \item For $m\in M$ let $\dot a_m=\{\dot x_{m,\alpha}\mid\alpha<\kappa\}^\bullet$.
  \item Let $\dot N=\{\dot a_m\mid m\in M\}^\bullet$.
  \item For $R\subseteq M^n$ we let $\dot R_N=\{\tup{\dot a_{m_0},\dots,\dot a_{m_{n-1}}}^\bullet\mid\tup{m_0,\dots,m_{n-1}}\in R\}^\bullet$.
  \end{enumerate}

  It is a straightforward check to see that $\pi\dot x_{m,\alpha}=\dot x_{\pi^*m,\pi_m\alpha}$, consequently we have that $\pi\dot a_m=\dot a_{\pi^*m}$. Suppose that $R\subseteq M$ such that $\stab_{\sG_M}(R)\in\sF_M$,\footnote{We can assume $R\subseteq M^n$, but taking $n=1$ simplifies the notation.} then $\dot R_N\in\HS$. To see this, note that if $\pi\in\sG$ such that $\pi^*\in\stab_{\sG_M}(R)$, then we have that: \[\pi\dot R_N=\{\pi\dot a_m\mid m\in R\}^\bullet=\{\dot a_{\pi^*m}\mid m\in R\}^\bullet=\{\dot a_m\mid m\in R\}^\bullet=\dot R_N.\] Therefore, taking any $E\in[\kappa]^{<\kappa}$ will satisfy that $\stab_{\sG_M}(R)\times\fix(E)\subseteq\sym(\dot R)$.

  In the other direction, if $\dot R\in\HS$ and $p\forces\dot R\subseteq\dot N$, since $\PP$ does not add subsets to $M$, there is some $q\leq p$ and some $R'\subseteq M$ such that $q\forces\dot R=\dot R'_N$. Our goal is to show that $\{\pi^*\mid\pi\in\sym(\dot R)\}\subseteq\stab_{\sG_M}(R')$, and therefore $\stab_{\sG_M}(R')\in\sF_M$.

  Since $q\forces\dot R=\dot R'_N$ we have that $q\forces\dot a_m\in\dot R$ if and only if $m\in R'$. Suppose that $\pi\in\sym(\dot R)$, using the Homogeneity Lemma below we can find $\tau\in\sym(\dot R)$ such that $\tau^*=\id$, such that $\tau\pi q$ is compatible with $q$, and $\tau\pi q\leq p$. Since $\tau\pi q\forces\dot a_{\pi^*m}\in\dot R$ and it is compatible with $q$, it must be that $q\forces\dot a_{\pi^*m}\in\dot R$ as well, since $q$ already decides all the statements of this form. Therefore $\pi^*m\in R'$. By applying the same argument to $\pi^{-1}$ we obtain that indeed $\pi^*``R'=R'$.

  Therefore in the symmetric extension it must be the case that $\dot R$ is interpreted as a copy of an $\sF_M$-stable set.
\end{proof}
  \begin{lemma}[Homogeneity Lemma]
    Suppose that $\tau\in\sG_M$ is any automorphism and $p,q\in\PP$ are any two conditions, then there is $\pi\in\sG$ such that $\pi^*=\tau$ and $\pi p$ is compatible with $q$.
  \end{lemma}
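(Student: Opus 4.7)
The plan is to exploit the largeness of $\kappa$ by setting $\pi^{*}=\tau$ and then choosing the wreath coordinates $\langle\pi_m\mid m\in M\rangle$ so that the domain of $\pi p$ becomes disjoint from the domain of $q$; once that is arranged, $\pi p\cup q$ is automatically a single partial function of size less than $\kappa$, hence a condition extending both $\pi p$ and $q$.

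First I would unpack the action: with $\pi=(\tau,\langle\pi_m\rangle)$ one has $\dom(\pi p)=\{(\tau m,\pi_m\alpha,\beta):(m,\alpha,\beta)\in\dom(p)\}$. For each $m\in M$ introduce the projections
\[A_p(m)=\{\alpha<\kappa:\exists\beta\,(m,\alpha,\beta)\in\dom(p)\},\qquad A_q(m)=\{\alpha<\kappa:\exists\beta\,(m,\alpha,\beta)\in\dom(q)\};\]
since $|p|,|q|<\kappa$ and $\kappa$ is regular, each such projection has cardinality strictly less than $\kappa$. For every $m$ with $A_p(m)\ne\varnothing$ I pick a permutation $\pi_m\in S_\kappa$ mapping $A_p(m)$ into $\kappa\setminus A_q(\tau m)$; such a $\pi_m$ exists because the target complement has full size $\kappa$ while $A_p(m)$ is small, and any such injection extends to a bijection of $\kappa$ with itself. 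For the remaining $m$ I simply take $\pi_m=\id$.

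The verification of compatibility is then immediate: any triple in $\dom(\pi p)$ has the form $(\tau m,\pi_m\alpha,\beta)$ with $\alpha\in A_p(m)$, so its middle coordinate lands in $\kappa\setminus A_q(\tau m)$ and the triple lies outside $\dom(q)$. The one thing to keep in mind is that we are making $|M|$-many independent choices of $\pi_m$; this is harmless because each choice is purely local in the $\alpha$-coordinate and we are working over a model of $\ZFC$. The essential input is just the room provided by the regularity of $\kappa$ together with $\kappa\ge|M|^+$ and $|p|,|q|<\kappa$, so this is really a counting-room argument of the standard symmetric-copy flavour rather than anything requiring genuine ingenuity.
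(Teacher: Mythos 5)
Your argument is correct and rests on the same counting-room idea as the paper: the regularity of $\kappa$, together with $|p|,|q|<\kappa$, provides enough room to move the middle coordinates of $\dom(\pi p)$ off those of $\dom q$. The paper's proof differs only cosmetically --- it first reduces to $\tau=\id$ and then uses a single uniform interval swap $[0,\alpha)\leftrightarrow[\alpha,\alpha+\alpha)$ for every $\pi_m$ (where $\alpha$ bounds all middle coordinates occurring in $p$ and $q$), whereas you keep $\pi^*=\tau$ directly and choose each $\pi_m$ coordinatewise; the paper's uniform choice avoids the minor selection step you flag, but since $M$ is well-orderable here (we chose $\kappa\geq|M|^+$) this is immaterial.
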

\begin{proof}
  It is enough to prove this under the assumption that $\tau=\id$, since in that case if we have $\pi$ such that $\pi^*=\tau$ and $\pi p$ is incompatible with $q$ we can find some $\sigma\in\sG$ such that $\sigma^*=\id$ and $\sigma\pi p$ is compatible with $q$, but since $(\sigma\pi)^*=\sigma^*\pi^*=\pi^*$ the conclusion follows.

  Suppose now that we are given $p$ and $q$. For each $m\in M$, define $\alpha_p$ to be $\sup\{\alpha+1<\kappa\mid\exists m:\tup{m,\alpha}\in\dom p\}$, define $\alpha_q$ similarly, and let $\alpha=\max\{\alpha_p,\alpha_q\}$. Note that $\kappa$ is regular and $\alpha<\kappa$ since $|M|<\kappa$, so the permutation $\pi$ such that $\pi^*=\id$ and $\pi_m$ is the permutation which switches the two intervals $[0,\alpha)$ with $[\alpha,\alpha+\alpha)$, for all $m\in M$, is in $\sG$ and therefore $\dom \pi p\cap\dom q=\varnothing$.\footnote{We may choose $\pi_m$ to be some other appropriate automorphism on each coordinate, rather than uniformly move all the coordinates at once.}
\end{proof}
It is worth noting here that the Homogeneity Lemma is essentially the proof that $\sG_M\wr\sG_\kappa$ is pacifying, which we will need for applying the results from the previous part.
\subsection{New cardinalities in products of symmetric extensions}
Suppose that $M_0$ and $M_1$ are two structures in the ground model (not necessarily in the same signature, but not necessarily different either). And let $\tup{\QQ_0,\sG_0,\sF_0}$ and $\tup{\QQ_1,\sG_1,\sF_1}$ be two symmetric systems as in the construction of \autoref{thm:sym-copy}. We want to understand the relationship between $N_0$ and $N_1$, the symmetric copies of $M_0$ and $M_1$ respectively, in the symmetric extension given by the product of the two symmetric systems.

\begin{theorem}\label{thm:incomp}
Assume that $M_1$ is not fixed pointwise by a group in $\sF_1$, then there is no surjection from $N_0$ to $N_1$ in the symmetric extension given by the aforementioned product. Therefore there are no injections either, and the cardinals $|N_0|$ and $|N_1|$ are incomparable.
\end{theorem}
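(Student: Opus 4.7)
Work in the product symmetric system with forcing $\QQ_0\times\QQ_1$, automorphism group $\sG_0\times\sG_1$, and filter $\sF$ generated by $\{H_0\times H_1\mid H_i\in\sF_i\}$. Assume toward contradiction that some $p$ forces a name $\dot f\in\HS$ to be a surjection from $\dot N_0$ onto $\dot N_1$, and fix a support $H_0\times H_1\subseteq\sym(\dot f)$ with $H_i=\fix(A_i\times B_i)$ in the notation of \autoref{thm:sym-copy}. The hypothesis on $M_1$, applied to $H_1$, supplies $\tau\in H_1$ and $m_1\in M_1$ with $\tau^*m_1\neq m_1$, and surjectivity of $\dot f$ yields some $q=(q_0,q_1)\leq p$ and $m_0\in M_0$ with $q\forces\dot f(\dot a_{m_0})=\dot a_{m_1}$.

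The key step is to produce a fresh automorphism $\pi\in H_1$ with $\pi^*=\tau^*$ and such that $\pi q_1$ is compatible with $q_1$. I would set $\pi_m=\id$ for every $m\in A_1$: since $\tau^*$ already fixes $A_1$ pointwise, this keeps $\pi$ inside $H_1=\fix(A_1\times B_1)$. For $m\notin A_1$, the Homogeneity Lemma tells me to pick $\pi_m\in S_\kappa$ that swaps two disjoint intervals of $\kappa$ lying strictly above every second-coordinate value appearing in $\dom(q_1)$ (there are fewer than $\kappa$ such values). A case split on $(m,\alpha,\beta)\in\dom(q_1)$ shows that any shared coordinate of $\dom(\pi q_1)$ and $\dom(q_1)$ must have its first entry in $A_1$, where $\pi$ acts trivially, so $\pi q_1$ and $q_1$ agree on their common domain.

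The product automorphism $(\id,\pi)$ lies in $H_0\times H_1\subseteq\sym(\dot f)$, fixes the pure $\QQ_0$-name $\dot a_{m_0}$, and sends $\dot a_{m_1}$ to $\dot a_{\pi^*m_1}=\dot a_{\tau^*m_1}$. The Symmetry Lemma therefore converts $q\forces\dot f(\dot a_{m_0})=\dot a_{m_1}$ into $(\id,\pi)q\forces\dot f(\dot a_{m_0})=\dot a_{\tau^*m_1}$, so any common extension of $q$ and $(\id,\pi)q$ forces $\dot a_{m_1}=\dot a_{\tau^*m_1}$; but for distinct indices $m\neq m'$ the names $\dot a_m$ and $\dot a_{m'}$ enumerate mutually generic $\kappa$-Cohen subsets supported on disjoint blocks of coordinates, hence are forced to be distinct, giving the desired contradiction. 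The hard part is the middle step: assembling a single $\pi$ that simultaneously sits inside the prescribed support $H_1$, realises the non-trivial action $\tau^*$ on $M_1$, and leaves $q_1$ compatible with its image; this is precisely the flexibility built into the wreath-product construction of \autoref{thm:sym-copy}. The ``no injection'' clause and hence the cardinal incomparability then follow from the elementary $\ZF$ fact that an injection $N_1\hookrightarrow N_0$ (with $N_1\neq\varnothing$) converts into a surjection $N_0\twoheadrightarrow N_1$, which we have just ruled out.
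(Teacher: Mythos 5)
Your proof is correct and takes essentially the same route as the paper's: the paper obtains your automorphism $\pi$ as the composite $\tau_1\pi_1$ of a $\pi_1\in H_1$ moving $m_1$ and a compatibility-restoring $\tau_1$ from the Homogeneity Lemma, which is precisely your direct construction. You are slightly more explicit about why the assembled $\pi$ actually lands in $H_1$ (the $\pi_m=\id$ choice on the $A_1$-block) and why distinct $\dot a_m$'s are forced to denote distinct sets, both of which the paper leaves implicit.
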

\begin{proof}
  Let $\dot f\in\HS$ and $\tup{p_0,p_1}$ be such that $\tup{p_0,p_1}\forces\dot f\colon\dot N_0\to\dot N_1$ is a function. Let $\tup{H_0,H_1}\in\sF_0\times\sF_1$ be such that $H_0\times H_1\subseteq\sym(\dot f)$, and without loss of generality we may assume that $H_i$ fixes $p_i$.

  Let $\dot a_{m_1}$ be a name in $\dot N_1$ such that some $\pi_1\in H_1$ satisfies $\pi_1\dot a_{m_1}\neq\dot a_{m_1}$. Such $m_1$ exists since $M_1$ is not fixed pointwise by any group in $\sF_1$. Let $\tup{q_0,q_1}\leq\tup{p_0,p_1}$ such that for some $m_0\in M_0$, $\tup{q_0,q_1}\forces\dot f(\dot a_{m_0})=\dot a_{m_1}$.

  Applying the Homogeneity lemma to $H_1$ and $q_1$ we can obtain some $\tau_1\in H_1$ such that $\tau_1\pi_1 q_1$ is compatible with $q_1$. Therefore we get that \[\tau\pi\tup{q_0,q_1}=\tup{q_0,\tau_1\pi_1 q_1}\forces\dot f(\dot a_{m_0})=\tau\pi\dot a_{m_1}=\pi\dot a_{m_1}\neq\dot a_{m_1}.\] However, since $q_1$ and $\tau_1\pi_1 q_1$ are compatible, so must be $\tau\pi\tup{q_0,q_1}$ and $\tup{q_0,q_1}$. In particular, it follows that $\tup{q_0,q_1}$ could not force that $\dot f$ is a function to begin with, and so $\tup{p_0,p_1}$ could not have done that either.
\end{proof}

The argument extends, of course, to any finite product, and much more. This argument is the same as the one in the proof of Theorem~3.5 in \cite{Karagila:Embeddings}. What we see here is that any structure which has a reasonably rich automorphism group can be used to create new cardinals in the symmetric extension. Seeing how we are mostly interested in structures that have very rich automorphisms, the assumptions of this theorem will be satisfied in all of its applications moving forward.
\section{First application: a slightly better Morris model}\label{sec:morris}
In \cite{Karagila:Morris} we construct, to a certain degree, the model Douglass B.~Morris constructed in his Ph.D.\ thesis \cite{Morris:PhD}. In this model, for every ordinal $\alpha$ there is a set $A_\alpha$ which is a countable union of countable sets, but $\power(A_\alpha)$ can be mapped onto $\omega_\alpha$.

Morris' original proof was never published beyond his Ph.D.\ thesis and a small announcement in the Notices of the American Mathematical Society,\footnote{Notices of the American Mathematical Society \textbf{17} (1970), 70T-E27.} but knowledge of the result was preserved due to its appearance as Problem~5.14 in \cite{Jech:AC}. In \cite{Karagila:Morris} a slightly simplified version of the model was constructed by the author using the mechanisms of iterations of symmetric extensions that allowed the construction to be done by dealing locally with each $\omega_\alpha$ and then combining the results immediately.\footnote{For an overview of Morris' original proof and how it differs from the author's see \cite[\S4.1]{Karagila:Morris}.}

One of the natural questions when confronted with such a result is ``can we replace $\omega_\alpha$ by $V_\alpha$'', since that would imply that not only every ordinal is the image of the power set of a set which is a countable union of countable sets, but indeed that \textit{every} set is such image. This can be done using the observations we include in \autoref{sec:folk} and suitably modifying the local constructions of \cite{Karagila:Morris}. We will present here a slightly improved version of the construction, which will also remove a small gap in the proof appearing in \cite{Karagila:Morris}.

Specifically, the model is constructed by taking a product of two-step iterations of symmetric extensions. Focusing on a single ``local copy'' we start by adding a symmetric copy of a countable union of countable sets. Namely, consider the structure $\omega\times\omega$ with the group of automorphisms which permute each $\{n\}\times\omega$ separately,\footnote{We may assume that these permutations only move finitely many points overall, but with the advents of iterations of symmetric extensions it is not a particularly important assumption.} and with the filter of groups given by fixing $E\times\omega$ pointwise for some finite $E\subseteq\omega$.

After adding our set $A$ which is a countable union of countable sets but is not countable, we let $T$ be the forcing which chooses a single point from each of the countable sets in the union, this adds a subset to $A$, and in fact this is a symmetric copy of $\omega^{<\omega}$ which has no branches.

The second symmetric extension, as presented in \cite{Karagila:Morris}, is given by generically adding an almost disjoint family of branches to $T$ indexed by $\kappa\times\omega$ by considering pairs $\tup{t,f_t}$ such that $t$ is a function from $E\in[\kappa\times\omega]^{<\omega}$ into $T$ and $f_t\colon\power(E)\to\omega$, such that if $f_t(a)=n$, then all the conditions in $t\restriction a$ are pairwise disjoint above $n$. We will improve this step of the construction here.

We then partition the generic sequence of branches, now indexed by $\kappa\times\omega$, into a $\kappa$-sequence of $\omega$-blocks and allow permutations to move each $\omega$-block separately without changing the $\kappa$-sequence. This makes the set of branches a Dedekind-finite set which can be mapped onto $\kappa$.

The model constructed by the author is using the fact that at each of these local copies we interact only with our local $\kappa$, and so we can take the product of these two-step iterations and utilise what we already know about iterations of symmetric extensions to obtain the ``simplified Morris model''.\footnote{Here we utilise the fact that each step is $\kappa$-closed iterated by a c.c.c.\ forcing; Morris' original iteration was using c.c.c.\ forcings in a more complicated manner which requires a careful attention to the details at limit steps. Hence the origin of the term ``simplified''.}

It seems easy to answer the question we began with. To get a surjection onto $V_\kappa$, we simply need to add $V_\kappa\times\omega$ branches to $T$ instead of $\kappa\times\omega$. This, however, means that we are no longer working separately on each $\omega_{\alpha+1}$, but rather at each step we need to take into accounts all the previous steps. While it is certainly not impossible,\footnote{Morris' original proof does just that.} it does complicate things. For one, it is not immediately clear why the forcing remains distributive until one notes that due to \autoref{thm:add-k-dist} no new subsets of small rank are added, assuming that $\kappa$ was chosen correctly. The rest of this section will be devoted to the details of this proof. As some of these proofs will be very similar to those appearing in \cite{Karagila:Morris}, we will sometimes choose to sketch the proofs of certain claims, giving only the idea behind the proof instead.
\subsection{Morris' model revisited}
We define an iteration of symmetric extensions by recursion, working in $V$, a model of $\ZFC+\GCH$. We denote by $\PP_\alpha,\cG_\alpha$ and $\cF_\alpha$ the symmetric iteration up to $\alpha$. This is a finite support iteration.

Suppose that $\PP_\alpha,\cG_\alpha$ and $\cF_\alpha$ were defined. We will define $\dot\QQ_\alpha,\dot\sG_\alpha,\dot\sF_\alpha$, which is itself a two-step iteration, such that the following is true:
\begin{enumerate}
\item $\forces_\alpha^\IS\tup{\dot\QQ_\alpha,\dot\sG_\alpha,\dot\sF_\alpha}^\bullet$ does not add sets of rank $\check\alpha$.
\item $\forces_{\alpha+1}^\IS$ There exists a set $\dot A_\alpha$ which is a countable union of countable sets, and $\power(\dot A_\alpha)$ maps onto $\dot V_\alpha$.
\end{enumerate}

Let $\kappa$ be a suitable cardinal in $V$ for the $\alpha$th stage. Namely, one for which $|\dot V_\alpha^{\IS_\alpha}|<\kappa$ so that the conditions of \autoref{thm:add-k-dist} hold. The symmetric system $\tup{\QQ_{\alpha,0},\dot\sG_{\alpha,0},\dot\sF_{\alpha,0}}$ is simply the symmetric system obtained from \autoref{thm:sym-copy}, with $\kappa$ being this suitable cardinal, applied to the structure $M=\omega\times\omega$ with $\sG_M=\{\id\}\wr S_\omega$, in other words these are permutations of $\omega\times\omega$ such that whenever $\pi(n,m)=\tup{n',m'}$, $n=n'$. The filter $\sF_M$ is generated by groups of the form $\fix(E)$ for $E\in[\omega]^{<\omega}$, where $\fix(E)$ is $\{\pi\in\sG_{\alpha,0}\mid\pi\restriction E\times\omega=\id\}$. We will write $\forces_{\alpha.5}$ to denote the forcing relation of $\PP_\alpha\ast\dot\QQ_{\alpha,0}$, and similarly $\IS_{\alpha.5}$ for the class of names predicting the symmetric extension by $\QQ_{\alpha,0}$, etc.

Let $\dot A_\alpha$ denote the symmetric copy of $\omega\times\omega$ added by the symmetric system, and let $\dot A_{\alpha,n}$ denote the symmetric copy of $\{n\}\times\omega$. It is not hard to verify that indeed $\forces_{\alpha.5}^\IS``\dot A_{\alpha,n}$ is countable'' and that $\tup{\dot A_{\alpha,n}\mid n<\omega}^\bullet\in\IS_{\alpha.5}$.

\begin{claim}
  $\forces_{\alpha.5}^\IS|\dot A_\alpha|\neq\aleph_0$.
\end{claim}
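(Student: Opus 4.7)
The plan is to reduce the claim to showing that there is no choice function for the family $\langle\dot A_{\alpha,n}\mid n<\omega\rangle$ in $\IS_{\alpha.5}$, that is, no $\dot c\in\IS_{\alpha.5}$ forced to be a function $\check\omega\to\dot A_\alpha$ with $\dot c(\check n)\in\dot A_{\alpha,n}$ for every $n$. This indeed suffices: any bijection $\dot f\colon\check\omega\to\dot A_\alpha$ in $\IS_{\alpha.5}$ produces such a $\dot c$ by setting $\dot c(\check n)$ to be $\dot f$ evaluated at the least $k$ for which $\dot f(\check k)\in\dot A_{\alpha,n}$, and the resulting name is hereditarily $\cF_{\alpha.5}$-respected because $\dot f$ and $\langle\dot A_{\alpha,n}\rangle_n$ are.

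Suppose toward a contradiction that such $\dot c$ and a condition $p$ exist. By the description of $\sF_{\alpha,0}$ from \autoref{thm:sym-copy} and our choice of $\sG_M$, $\sF_M$, a support for $\dot c$ on the final coordinate can be taken of the form $H=\fix((E\times\omega)\times B)$, where $E\in[\omega]^{<\omega}$ and $B\in[\kappa]^{<\kappa}$. Choose any $n\in\omega\setminus E$. By density there are $q\leq p$ and $j<\omega$ with $q\forces\dot c(\check n)=\dot a_{(n,j)}$.

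Let $\tau\in\sG_M$ be the transposition swapping $(n,j)$ and $(n,j+1)$; since $n\notin E$, $\tau$ fixes $E\times\omega$ pointwise. The Homogeneity Lemma from the proof of \autoref{thm:sym-copy} furnishes $\pi\in\sG$ with $\pi^*=\tau$ and $\pi q$ compatible with $q$, and with appropriate care in choosing the $S_\kappa$-components of $\pi$ one can simultaneously arrange $\pi\in H\subseteq\sym(\dot c)$. The Symmetry Lemma then gives $\pi q\forces\dot c(\check n)=\pi\dot a_{(n,j)}=\dot a_{(n,j+1)}$, so any common extension of $q$ and $\pi q$ forces $\dot a_{(n,j)}=\dot a_{(n,j+1)}$, contradicting the forced distinctness of these names (which arise from disjoint pieces of the $\kappa$-Cohen generic). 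The delicate point is coordinating $\pi\in H$ with $\pi q\compatible q$: because $E\times\omega$ is countable and $B$ has size $<\kappa$ while $\kappa$ is regular, there is plenty of room in $\kappa$ to choose the $S_\kappa$-parts of $\pi$ so as to push the parts of $q$ outside $E\times\omega\times B$ into fresh coordinates, without disturbing $q$ on $E\times\omega\times B$---exactly the kind of tuning the Homogeneity Lemma is designed to absorb.
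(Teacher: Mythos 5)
Your proof is correct and rests on the same core homogeneity argument as the paper's: a transposition on a column $\{n\}\times\omega$ with $n\notin E$, lifted to an automorphism inside the support group $\fix((E\times\omega)\times B)$ that moves $q$ to a compatible condition, forces two incompatible values of $\dot c(\check n)$. The paper's sketch works directly with an arbitrary $\dot f\colon\check\omega\to\dot A_\alpha$ and shows $\rng\dot f\subseteq\bigcup_{k\in E}\dot A_{\alpha,k}$, a proper countable subset, while your preliminary reduction to the non-existence of a choice function for $\langle\dot A_{\alpha,n}\mid n<\omega\rangle$ is a harmless repackaging that reaches the same contradiction.
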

\begin{proof}[Sketch of Proof]
Suppose that $\dot f\in\IS_{\alpha.5}$ and $\tup{p,\dot q}\forces^\IS_{\alpha.5}\dot f\colon\check\omega\to\dot A$. Suppose that $E$ is a finite subset of $\omega$ such that the last group in the $\cF_{\alpha.5}$-support of $\dot f$ is $\fix(E)$. Simply extend $\dot q$ to some $\dot q'$ which decides that $\dot f(\check n)\in\dot A_{\alpha,k}$ for some $k\notin A$, then use the Homogeneity Lemma to show that this is impossible. Therefore $\tup{p,\dot q}\forces^\IS_{\alpha.5}\rng\dot f\subseteq\bigcup_{k\in E}\dot A_{\alpha,k}$.
\end{proof}

Let $T$ denote the choice-tree from the family $\{A_{\alpha,n}\mid n<\omega\}$. In other words, $T=\bigcup_{n<\omega}\prod_{k<n}A_{\alpha,k}$. The above claim shows that $T$ has no branches. As we said, we wish to add many new branches to $T$, but we will need to ensure that they are sufficiently distant from one another, otherwise we can't control them easily with permutations from $\sG_{\alpha,0}$, but not too disjoint; otherwise we can use them to encode an enumeration of $A$, which ultimately leads to the collapse of cardinals.

We next define $\dot\QQ_{\alpha,1}$, and this is where we diverge from \cite{Karagila:Morris}. For simplicity, let us work in the extension given by $\IS_{\alpha.5}$ and come up with names for all the objects later on. The conditions in $\QQ_{\alpha,1}$ are going to be finite sequences $t=\tup{t_i\mid i\in E}$, where $E\subseteq V_\alpha\times\omega$ is a finite set, and $t_i\in T$ for all $i\in E$, we say that $E$ is the \textit{support} of $t$. The order is not defined just as pointwise extensions, but instead we say that $t\leq_{\QQ_{\alpha,1}}s$ if the following conditions hold:
\begin{enumerate}
\item $\supp s\subseteq\supp t$ and $s_i\subseteq t_i$ for all $i\in\supp s$.
\item For all $i,j\in\supp s$, if $t_i(k)=t_j(k)$, then $k\in\dom s_i\cap\dom s_j$.
\end{enumerate}
In other words, $t$ extends $s$ if not only the natural extension condition holds, but on $\supp s$ the extensions are pairwise disjoint.

We say that $t$ is a \textit{minimal condition} if whenever $s$ is such that $t\leq s$ and $\supp t=\supp s$ we have that $t=s$. So, a minimal condition means that we cannot trim any stems in $t$ without removing them completely. We will use the terminology $k$-minimal to mean that the minimality condition only affects stems whose length is greater than $k$.

We let $\sG_{\alpha,1}$ be the group $\{\id\}\wr S_\omega^{\fin}$, where $S_\omega^{\fin}$ is the group of finitary permutations of $\omega$, acting on $V_\alpha\times\omega$. In other words, we consider permutations of $V_\alpha\times\omega$ so that when we consider the partition $\{\{x\}\times\omega\mid x\in V_\alpha\}$, each cell is stabilised, and within each cell only finitely many points are moved. Finally, $\sF_{\alpha,1}$ is defined by pointwise stabilisers of finite subsets of $V_\alpha\times\omega$.\footnote{Note that this time we truly mean finite, and not finite union of $\omega$-blocks like we did in the case of $\sF_{\alpha,0}$.}

Define now a name for the $\tup{x,n}$th branch added by $\QQ_{\alpha,1}$, \[\dot b_{x,n}=\{\tup{t,\check a}\mid\tup{x,n}\in\dom t, t_{x,n}(i)=a\}.\]
It is easy to see that if $\pi\in\sG_{\alpha,1}$, then $\pi\dot b_{x,n}=\dot b_{\pi(x,n)}$. This gives naturally rise to the sets $\dot B_x=\{\dot b_{x,n}\mid n<\omega\}^\bullet$. Since $\pi$ must be the identity on its $V_\alpha$-component, $\pi\dot B_x=\dot B_x$ for all $\pi$. It follows that indeed $\dot B=\{\dot b_{x,n}\mid x\in V_\alpha, n<\omega\}^\bullet$ can be mapped onto $V_\alpha$. We continue to work within the intermediate model given by $\IS_{\alpha.5}$ for just a bit longer while we investigate this symmetric system and its effects on the universe.

\begin{proposition}
$\forces^\HS_{\QQ_{\alpha,1}}\dot B$ is Dedekind-finite.
\end{proposition}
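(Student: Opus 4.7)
My plan is to argue by contradiction. Assume there is $\dot f\in\HS$ and a condition $p\in\QQ_{\alpha,1}$ with $p\forces^\HS\dot f\colon\check\omega\to\dot B$ is an injection, and fix a finite $E\subseteq V_\alpha\times\omega$ such that $\fix(E)\subseteq\sym(\dot f)$. I would show that $p$ actually forces $\rng\dot f\subseteq\{\dot b_{x,k}\mid(x,k)\in E\}^\bullet$, which is a finite set and thus directly contradicts injectivity from $\check\omega$. So suppose toward contradiction that some $q\leq p$ decides $\dot f(\check n)=\dot b_{x,k}$ for a pair $(x,k)\notin E$.

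Choose $k'\neq k$ with $(x,k')\notin E\cup\supp q$, and build $\sigma\in\fix(E)$ of the form $\{\id\}\wr\tup{\sigma_y\mid y\in V_\alpha}$ such that $\sigma_x$ swaps $k$ with $k'$, and in each block $\{y\}\times\omega$ the permutation $\sigma_y$ sends every point of $\supp q\setminus E$ lying in that block to a fresh point outside $\supp q\cup E$. Such $\sigma$ exists because $\supp q$ is finite and each $\omega$-block is infinite. The crux of the argument -- and the step I expect to be the main obstacle -- is a homogeneity claim that $q$ and $\sigma q$ are compatible in $\QQ_{\alpha,1}$, which must be verified against the non-standard order on $\QQ_{\alpha,1}$. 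The common extension I have in mind is $r=q\cup\sigma q$: by the choice of $\sigma$ we have $\supp q\cap\supp\sigma q\subseteq E$, and $\sigma$ fixes $E$ pointwise, so the values agree on the overlap and $r$ is a well-defined finite partial function of the correct form. The disjointness clause in the definition of $\leq_{\QQ_{\alpha,1}}$ is vacuous for $r\leq q$ and $r\leq\sigma q$, because $r_i=q_i$ for $i\in\supp q$ and $r_i=(\sigma q)_i$ for $i\in\supp\sigma q$, i.e.~no stems are being strictly extended; so the clause ``if $r_i(j)=r_l(j)$ then $j\in\dom q_i\cap\dom q_l$'' holds trivially from the fact that $q_i(j)$ is only defined for $j\in\dom q_i$.

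With compatibility in hand, the Symmetry Lemma applied to $\sigma\in\sym(\dot f)$ yields $\sigma q\forces\dot f(\check n)=\sigma\dot b_{x,k}=\dot b_{x,k'}$, and therefore $r\forces\dot b_{x,k}=\dot b_{x,k'}$. To close the contradiction I would extend $r$ to some $r'\leq r$ forcing $\dot b_{x,k}\neq\dot b_{x,k'}$: by construction of $\sigma q$ we have $r_{x,k'}=(\sigma q)_{x,k'}=q_{x,k}=r_{x,k}$, so lengthening the two stems $r_{x,k}$ and $r_{x,k'}$ by one coordinate with distinct values, chosen to avoid the finitely many values $r_i(|q_{x,k}|)$ for the other $i\in\supp r$, produces a legal condition $r'$ which forces $\dot b_{x,k}$ and $\dot b_{x,k'}$ to disagree at that coordinate. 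This contradicts $r\forces\dot b_{x,k}=\dot b_{x,k'}$ and completes the proof, establishing the Dedekind-finiteness of $\dot B$ in the symmetric extension by $\tup{\QQ_{\alpha,1},\sG_{\alpha,1},\sF_{\alpha,1}}$.
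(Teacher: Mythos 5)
Your proof is correct and takes essentially the same route as the paper's: use a permutation in $\fix(E)$ to move the forced branch index $(x,k)$ to a fresh index $(x,k')$, show the two conditions are compatible, and derive a contradiction. The paper uses a bare transposition $(x,n)\leftrightarrow(x,n')$ (after the WLOG $\supp t\subseteq E$) and simply asserts that forcing $\dot b_{x,n}=\dot b_{x,n'}$ is ``impossible''; you verify compatibility carefully against the nonstandard order on $\QQ_{\alpha,1}$ and explicitly close the contradiction by extending the two coincident stems to disagree at the next level, which is the step the paper leaves implicit.
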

\begin{proof}
  Suppose that $\dot F\in\HS$ is a function and $t\forces_{\QQ_{\alpha,1}}\dot F\colon\check\omega\to\dot B$. Let $E$ be a finite set such that $\fix(E)\subseteq\sym(\dot F)$, we may assume without loss of generality that $\supp t\subseteq E$. Let $t'\leq t$ be such that for some $\tup{x,n}\notin E$ and $m<\omega$, $t'\forces\dot F(\check m)=\dot b_{x,n}$.

  Let $n'<\omega$ be such that $\tup{x,n'}\notin E\cup\supp t'$, and consider $\pi\in\sG_{\alpha,1}$ defined by $\pi(x,n)=\tup{x,n'}$, $\pi(x,n')=\tup{x,n}$, and $\pi=\id$ otherwise. Clearly, $\pi\in\fix(E)$ and $\pi t'$ is compatible with $t'$, but they force that the value of $\dot F(\check m)$ is different. This is impossible, therefore no such $\tup{x,n}$ exists, so in particular $\dot F$ must have a finite range and it is not injective.
\end{proof}

It remains to show that $\dot\QQ_{\alpha,1}$ did not add any new sets of rank $\alpha$, and since $\QQ_{\alpha,0}$ did not either, this will complete the proof. For this we need to first identify the names for the conditions of $\dot\QQ_{\alpha,1}$. We will work in the extension $\IS_\alpha$ for the remainder of this argument.

We can give $T$, the choice-tree, a canonical name. Each $\dot A_{\alpha,n}$ has a canonical $\dot\QQ_{\alpha,0}$-name, $\{\dot a_{\alpha,n,k}\mid k<\omega\}^\bullet$. For $f\in\omega^{<\omega}$ we let $\dot c_f=\tup{\dot a_{\alpha,n,f(n)}\mid n\in\dom f}^\bullet$, and $\dot T=\{\dot c_f\mid f\in\omega^{<\omega}\}^\bullet$. Given any finite $E\subseteq V_\alpha\times\omega$ and a sequence $p$ of the form $\tup{f_i\mid i\in E}$, where $f_i\in\omega^{<\omega}$, we define $\dot t_p=\tup{\dot c_{f_i}\mid i\in E}^\bullet$. From this we have that $\dot\QQ_{\alpha,1}$ has a natural name:\[\{\dot t_p\mid p\text{ a finite support function from }V_\alpha\times\omega\to\omega^{<\omega}\}^\bullet.\]

It is easy to see that this name is in $\HS$, since the only thing we need to verify is that if $\dot c_f$ is a name for a finite choice sequence from the $\dot A_{\alpha,n}$s, then $\pi\dot c_f$ is also such a name, but this is true by the virtue of $\pi\dot A_{\alpha,n}=\dot A_{\alpha,n}$ for all $n<\omega$. We can now talk about $\QQ_\alpha,\sG_\alpha$ and $\sF_\alpha$ as the iteration of $\tup{\QQ_{\alpha,0},\sG_{\alpha,0},\sF_{\alpha,0}}\ast\tup{\dot\QQ_{\alpha,1},\dot\sG_{\alpha,1},\sF_{\alpha,1}}^\bullet$.

\begin{theorem}\label{prop:morris-dist}
Suppose that $\dot a$ is a $\QQ_\alpha$-name which is symmetric and $\forces_{\QQ_\alpha}^\HS\dot a\subseteq\check V_\alpha$. Then there is some $a\subseteq V_\alpha$ and a condition $q=\tup{q_0,\dot q_1}$ such that $q\forces\dot a=\check a$.
\end{theorem}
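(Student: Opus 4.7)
The plan is to treat the two halves of the single-stage iteration $\QQ_\alpha = \QQ_{\alpha,0} \ast \dot\QQ_{\alpha,1}$ separately, showing that neither half of the symmetric system contributes a new subset of $V_\alpha$. The first half is the symmetric copy construction of \autoref{thm:sym-copy} applied to $M = \omega \times \omega$ with ambient cardinal $\kappa$ chosen large enough that $\kappa > |V_\alpha^{\IS_\alpha}|$ in $V$. By \autoref{thm:add-k-dist} the underlying poset adds no subsets of $V_\alpha$, and $\sG_M \wr S_\kappa$ is finitely pacifying — essentially the content of the Homogeneity Lemma in the proof of \autoref{thm:sym-copy}. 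So \autoref{lemma:folk-distrib} gives $V_\alpha^{\IS_\alpha} = V_\alpha^{\IS_{\alpha.5}}$, and the inductive hypothesis reduces matters to analysing the second half.

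For that, I would work in $\IS_{\alpha.5}$ throughout. Given $\dot a \in \HS_{\sF_{\alpha,1}}$ with $\forces^\HS_{\dot\QQ_{\alpha,1}} \dot a \subseteq \check V_\alpha$, fix a finite $E \subseteq V_\alpha \times \omega$ with $\fix(E) \subseteq \sym(\dot a)$ and simplify $\dot a$ so that its only members are pairs $\tup{t, \check x}$ for $x \in V_\alpha$ and $t \in \dot\QQ_{\alpha,1}$. The combinatorial heart of the argument is the claim that $t \forces^\HS \check x \in \dot a$ already implies $t \restriction E \forces^\HS \check x \in \dot a$. I would prove this by the standard automorphism shift: for any $s \leq t \restriction E$, choose $\pi \in \fix(E)$ acting as a finitary permutation inside each fiber $\{y\} \times \omega$, sending the coordinates of $\supp t \setminus E$ to fresh coordinates of the same fibers disjoint from $\supp s$. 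Then $\pi t$ and $s$ agree on $E$ and have disjoint supports outside $E$, and — after possibly first refining $s$ to some $s' \leq s$ with longer stems, to head off accidental stem collisions, which the infinitude of each $\dot A_{\alpha,n}$ always permits — they admit a common extension. Since $\pi \dot a = \dot a$ we have $\pi t \forces \check x \in \dot a$, ruling out $s \forces \check x \notin \dot a$.

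Given the claim, $\dot a$ is forced equal to $\dot a' = \{\tup{t, \check x} \mid \supp t \subseteq E,\ t \forces^\HS \check x \in \dot a\}$. Fix any $\dot q_1 \in \dot\QQ_{\alpha,1}$ with $\supp \dot q_1 \subseteq E$; then $a = \{x \in V_\alpha \mid \dot q_1 \forces^\HS \check x \in \dot a\}$, computed in $\IS_{\alpha.5}$, is a subset of $V_\alpha$ and so, by the first-half analysis, already lies in $V$. Taking $q = \tup{q_0, \dot q_1}$ for an arbitrary $q_0 \in \QQ_{\alpha,0}$ then satisfies $q \forces \dot a = \check a$. The step I expect to require the most care, and which is the gap in \cite{Karagila:Morris} that the new order on $\dot\QQ_{\alpha,1}$ in this section is designed to close, is the bookkeeping in the compatibility step: the order on $\dot\QQ_{\alpha,1}$ insists that extensions of common-support stems be pairwise disjoint above the original stem length, so one cannot merely glue $\pi t$ and $s$ together. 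The longer stems of $s'$ must be selected fiber-by-fiber to avoid both the preserved $E$-stems of $\pi t$ and its shifted non-$E$-stems simultaneously, and verifying that this is jointly possible for a finite support is the delicate point where the refined definition pays off.
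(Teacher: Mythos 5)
Your decomposition into the two halves of $\QQ_\alpha=\QQ_{\alpha,0}\ast\dot\QQ_{\alpha,1}$ is a sensible plan, and the first half is handled correctly: \autoref{thm:add-k-dist} together with \autoref{lemma:folk-distrib} (via the Homogeneity Lemma) does give $V_\alpha^{\IS_\alpha}=V_\alpha^{\IS_{\alpha.5}}$. The problem is the second half. Working in $\IS_{\alpha.5}$, the only automorphisms you have left are those of $\dot\QQ_{\alpha,1}$, and these are permutations of the index set $V_\alpha\times\omega$ fixing each fiber $\{x\}\times\omega$ setwise. Crucially, such a $\pi$ never changes a stem: $(\pi t)_{\pi i}=t_i$ as an element of $T$. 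So if $s\leq t\restriction E$ happens to extend some $s_i$, $i\in E$, in a way that collides with some off-$E$ stem of $t$ --- that is, $s_i(k)=t_j(k)$ for some $j\in\supp t\setminus E$ and some level $k\in\dom s_i\cap\dom t_j$ with $k\notin\dom t_i$ --- then $\pi t$ and $s$ remain incompatible for \emph{every} $\pi\in\fix(E)$, because the shifted stem $(\pi t)_{\pi j}=t_j$ still carries the colliding value. Your proposed fix, refining $s$ to $s'\leq s$ with longer stems, cannot help: extending a stem leaves all its existing values fixed, so the collision at level $k$ persists. The infinitude of each $A_{\alpha,n}$ buys you nothing once $s_i(k)$ has already been committed. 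So the key claim ``$t\forces\check x\in\dot a$ implies $t\restriction E\forces\check x\in\dot a$'' is not established, and there is no reason $\dot q_1$ with $\supp\dot q_1\subseteq E$ should decide $\check x\in\dot a$ for every $x$.

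This is exactly why the paper's proof does \emph{not} factor the argument through $\IS_{\alpha.5}$: it works directly with $\QQ_\alpha$-names and deploys an automorphism $\pi$ of $\QQ_{\alpha,0}$, which permutes $\omega\times\omega$ within each column $\{n\}\times\omega$ and therefore \emph{does} move the elements of each $A_{\alpha,n}$ --- that is, it can change the values taken by stems. Because $\dot\QQ_{\alpha,1}$ is given by a canonical name built from the $\dot a_{\alpha,n,k}$, such a $\pi$ carries $\dot c_f$ to $\dot c_{f'}$ where $f'(n)=\pi_n(f(n))$, so it can shuffle two conditions $\dot q_1$ and $\dot s$ into compatible positions as long as they share a common $k$-minimal weakening and $\pi\in\fix(E_0)$ with $k=\max E_0+1$. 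This is the precise role of the $k$-minimality notion: below level $k$ the $\QQ_{\alpha,0}$-component $q_0$ restricts the allowed $\pi$, but above level $k$ the stem values can be freely aligned. Passing first to $\IS_{\alpha.5}$ throws away the automorphisms you actually need; the two steps cannot be decoupled in the way you propose.
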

\begin{proof}[Sketch of Proof]
  We may assume that every name that appears in $\dot a$ is of the form $\check x$ for some $x\in V_\alpha$. Next, let $E$ be a support for $\dot a$, that is $E=\tup{E_0,E_1}$ where $E_0\subseteq\omega$ and $E_1\subseteq V_\alpha\times\omega$ are finite.

  If $q=\tup{q_0,\dot q_1}$ is a condition in $\QQ_\alpha$, we will denote by $q\restriction E$ the condition \[\tup{q_0\restriction E_0\times\omega\times\kappa,\dot q_1\restriction E_1}.\] Suppose that $q\forces\check x\in\dot a$, then by the standard homogeneity arguments, $q\restriction E\forces\check x\in\dot a$, so we may assume that $q=q\restriction E$. The same applies for $\check x\notin\dot a$, of course. We can therefore assume that $\supp\dot q_1=E_1$, or extend it arbitrarily if necessary.

  Let $k$ be $\max E_0+1$, then we claim that we can further restrict $\dot q_1$ to any of its $k$-minimal weakening. Suppose that we proved this claim, then the theorem will follow: for any $x\in V_\alpha$ there is some $q'\leq q$ such that $q'\forces\check x\in\dot a$ or $q'\forces\check x\notin\dot a$, but now we can restrict $\dot q_1'$ to one of its $k$-minimal weakening. However, since $\dot q'_1\leq\dot q_1$, we can take this $k$-minimal weakening to be a weakening of $\dot q_1$ as well. It follows that $\dot a$ is reduced to a $\QQ_{\alpha,0}$-name given by \[\{\tup{r,\check x}\mid\exists\dot s\in\dot\QQ_{\alpha,1}\ k\text{-minimal}, \tup{r,\dot s}\forces_{\QQ_\alpha}\check x\in\dot a\}.\] Finally, since $\dot\QQ_{\alpha,0}$ did not add new subsets to $V_\alpha$, the conclusion follows.

  It remains to prove the claim. Namely, that if $\tup{q_0,\dot q_1}\forces\check x\in\dot a$, then $\tup{q_0,\dot q_1^{\min}}$ does so as well, where $\dot q_1^{\min}$ is a $k$-minimal weakening of $\dot q_1$. Suppose that $\dot s$ is such that $\dot s\leq_{\QQ_{\alpha,1}}\dot q_1^{\min}$ and $\supp\dot s=E_1$ as well, we will find an automorphism of $\QQ_{\alpha,0}$, $\pi$, such that:
\begin{enumerate}
\item $\pi\in\fix(E_0)$ which implies that $\pi q_0=q_0$.
\item $\pi\dot q_1^{\min}=\dot q_1^{\min}$.
\item $\pi\dot s$ is compatible with $\dot q_1$.
\end{enumerate}
Given that such a $\pi$ will not move $\dot a$ or $\check x$, it must be that no two extensions of $\dot q_1^{\min}$ can force contradictory truth values to $\check x\in\dot a$, which therefore completes the proof.

It would be helpful to think about the conditions of $\dot\QQ_{\alpha,1}$ as a finite sequence of stems in $\omega^{<\omega}$ in order to understand the situation. Above $\dot q_1^{\min}$ both $\dot q_1$ and $\dot s$ must have the same type in every level of the tree (above $k$). Namely, given $n\geq k$, the sequences which have the same value in the $n$th level of $\dot q_1$ must also have the same value in the $n$th level of $\dot s$. Moreover, if they do have the same value, this must be already captured by $\dot q_1^{\min}$.

So for each $n\geq k$ we can find a permutation of $\omega$, $\pi_n$, finite if we so desire, which moves the values of $\dot s$ in each level to the values of $\dot q_1$ without changing the values of $\dot q_1^{\min}$ in the level, if any exist. For all $n<k$, or large enough $n$ (above the levels in $\dot s$), let $\pi_n=\id$.\footnote{This is easier to visualise under the assumption that $\dot q_1^{\min}$ is literally just restricting all stems in $\dot q_1$ to their initial segment up to $k$, and that $\dot s$ is just an extension by a single level.}

We now let $\pi$ be the automorphism defined from the permutation of $\omega\times\omega$ given by the sequence $\tup{\pi_n\mid n<\omega}$. It is easy to see that $\pi$ satisfies the wanted properties since $E_0\subseteq k$ and $q_0=q_0\restriction E_0$. This completes the proof, as wanted.
\end{proof}
\section{Hic sunt dracones}\label{sec:pandemonium}
The revised Morris model is an impressive one. Indeed, one cannot extend it to a model of $\ZFC$ without adding ordinals,\footnote{This property was already true for the other constructions and was Morris' original goal.} and every set is the image of a Dedekind-finite set. But we can now utilise this general framework to tackle questions of saturation. Namely, if for every set $X$ there is a symmetric extension, or just any extension of the universe, in which $X$ has some counterexample to the axiom of choice, can we find a universe of set theory in which all the counterexamples already exist?

For example, the author's extension of L\"auchli's work.\footnote{L\"auchli used atoms and worked under the assumption that $F$ is countable.}
\begin{theorem*}[L\"auchli \cite{Lauchli:1963}, Karagila \cite{Karagila:MSc}]
  Given any field $F$, there is a symmetric extension in which $F$ has a vector space which is not finite dimensional, but every endomorphism of this vector space is a scalar multiplication.
\end{theorem*}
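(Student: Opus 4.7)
The plan is to invoke \autoref{thm:sym-copy} with $M$ taken to be an infinite-dimensional $F$-vector space in $V$, viewed as an $\cL$-structure where $\cL$ is the language of $F$-vector spaces (a binary operation $+$ together with a unary operation $c\cdot$ for each $c\in F$, encoded as their graphs). I would let $\sG_M=GL_F(M)$ be the full group of $F$-linear automorphisms, and let $\sF_M$ be the normal filter of subgroups generated by $\{\fix(A):A\leq M,\ \dim_F A<\infty\}$. Every finite subset of $M$ is contained in some finite-dimensional subspace, so the hypotheses of \autoref{thm:sym-copy} are satisfied; moreover the graphs of $+$ and each $c\cdot$ are $\sG_M$-invariant, hence $\sF_M$-stable, so they descend to operations on the symmetric copy $N$, making $N$ an $F$-vector space in the symmetric extension.

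To see that $N$ is not finite-dimensional, note that any finite subset $S\subseteq N$ is trivially the copy of a finite $S'\subseteq M$ (finite sets are always $\sF_M$-stable), and the statement ``$S$ spans $N$'' transfers across the copy correspondence (which is injective on elements and respects all $\cL$-operations) to yield a finite spanning set $S'$ for $M$ in $V$, contradicting $\dim_F M=\infty$.

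For the endomorphism claim, suppose $f\in\mathrm{End}_F(N)$ in the symmetric extension. Its graph $\Gamma_f\subseteq N\times N$ is a binary relation, hence by \autoref{thm:sym-copy} the copy of an $\sF_M$-stable subset $\Gamma\subseteq M\times M$ lying in $V$; and the first-order statements ``$\Gamma_f$ is a function'' and ``$\Gamma_f$ is $F$-linear'' transfer to say that $\Gamma$ is the graph of some $g\in\mathrm{End}_F(M)$ in $V$. Under the diagonal action of $\sG_M$ on $M\times M$, $\sF_M$-stability of $\Gamma$ amounts to the existence of a finite-dimensional $A\leq M$ such that $g\circ\pi=\pi\circ g$ for every $\pi\in\fix(A)$.

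The hard part is thus a purely linear-algebraic claim in $V$: any such $g$ is scalar multiplication. To prove it, fix a complement $U$ of $A$ with basis $\{e_i:i\in I\}$; for each $c\in F^\times$ and $i\in I$, the automorphism $\pi_{c,i}\in\fix(A)$ sending $e_i\mapsto c e_i$ and fixing $\{e_j:j\neq i\}$ commutes with $g$, forcing $g(e_i)$ into the $c$-eigenspace $F e_i$ of $\pi_{c,i}$; so $g(e_i)=\lambda_i e_i$. Transposition automorphisms $e_i\leftrightarrow e_j$ give $\lambda_i=\lambda_j=:\lambda$, and finally, for $a\in A$, the shear $\pi\in\fix(A)$ defined by $\pi(e_i)=e_i+a$ (fixing the other $e_j$) combined with $g(e_i)=\lambda e_i$ yields $g(a)=\lambda a$. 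Hence $g$ is multiplication by $\lambda$, and so is its copy $f$.
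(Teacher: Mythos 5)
Your overall strategy is exactly the paper's: apply \autoref{thm:sym-copy} to an infinite-dimensional $F$-vector space $M$ with $\sG_M=GL_F(M)$ and $\sF_M$ generated by pointwise stabilizers of finite-dimensional subspaces, transfer the graph of a putative endomorphism to an $\sF_M$-stable graph of some $g\in\mathrm{End}_F(M)$ in $V$, observe that $\sF_M$-stability of the graph under the diagonal action is precisely the centralizer condition $\fix(A)\subseteq C_{GL_F(M)}(g)$, and then do linear algebra. This is the same reduction the paper makes. However, your linear-algebraic endgame has a genuine gap: the dilation step ``for each $c\in F^\times$, \dots forcing $g(e_i)$ into the $c$-eigenspace $Fe_i$'' requires $c\neq 1$ for the $c$-eigenspace of $\pi_{c,i}$ to be one-dimensional (when $c=1$ the whole space is the eigenspace, so no constraint is obtained). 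Over $F=\mathbb F_2$ we have $F^\times=\{1\}$, so there are no nontrivial dilations and this step produces nothing. The theorem is stated for an arbitrary field $F$, so this case must be handled.

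The fix is to avoid dilations altogether, as the paper does: using the transpositions $e_i\leftrightarrow e_j$ in $\fix(A)$ one gets that $g(e_i)$ has all off-diagonal coefficients (over the complement basis) equal, and since the complement $U$ of $A$ is infinite-dimensional this common coefficient must be $0$; the $A$-component is also forced to be independent of $i$. Then your shear $\pi:e_i\mapsto e_i+e_j$ applied to $g(e_i)=a+\lambda e_i$ gives $2a=a$, hence $a=0$, so $g(e_i)=\lambda e_i$; and the shear $e_i\mapsto e_i+a'$ for $a'\in A$ finishes as you wrote. (Equivalently, one can use the paper's homogeneity formulation directly: for any $u,v$ linearly independent over a finite-dimensional $W$ there is $S\in\fix(W)$ with $Su=v$, and this holds over every field.) With that repair your argument is correct and coincides with the paper's.
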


Or the theorem of Jech and Takahashi, both of whom proved independently the following theorem.

\begin{theorem*}[Jech \cite{Jech:ORD1966}, Takahashi \cite{Tahakashi:1968}]
  Given a partial order $(P,\preceq)$, there is a symmetric extension in which $P$ is realised as cardinals. That is, for every $p\in P$ there is a set $X_p$ such that $|X_p|\leq|X_q|$ if and only if $p\preceq q$.
\end{theorem*}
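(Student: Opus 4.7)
The plan is to realise $(P,\preceq)$ within the cardinal structure using a single symmetric system of the type built in \autoref{thm:sym-copy}. Take $M=P\times\omega$ and let $\sG_M$ be the group of permutations of $M$ that act as the identity on the first coordinate: $\sigma\in\sG_M$ iff $\sigma(p,n)=(p,\sigma_p(n))$ for some family $\tup{\sigma_p\mid p\in P}$ in $S_\omega$. Let $\sF_M$ be the normal filter generated by the pointwise stabilisers $\fix(E)$ for $E\in[M]^{<\omega}$. For each $p\in P$ the subset $A_p=\{(q,n)\mid q\preceq p,\ n<\omega\}\subseteq M$ is fixed setwise by every element of $\sG_M$, since each $\sigma\in\sG_M$ permutes within the $P$-fibres and $A_p$ is a union of such fibres. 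Hence $A_p$ is $\sF_M$-stable. Applying \autoref{thm:sym-copy} with any sufficiently large regular $\kappa$ gives a symmetric extension in which the symmetric copy $N$ of $M$ exists together with the subset $X_p\subseteq N$ corresponding to $A_p$, namely $X_p=\{\dot a_{q,n}^G\mid q\preceq p,\ n<\omega\}$.

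The direction $p\preceq q\Rightarrow|X_p|\leq|X_q|$ is immediate, since $A_p\subseteq A_q$ yields $X_p\subseteq X_q$, so the inclusion is the required injection. For the converse, suppose $\dot f\in\HS$ and some condition forces $\dot f\colon\dot X_p\to\dot X_q$ to be an injection. Choose a support of $\dot f$ of the form $\fix(E\times B)$ with $E\in[M]^{<\omega}$ and $B\in[\kappa]^{<\kappa}$. Pick $n$ with $(p,n)\notin E$ (possible as $E$ is finite) and strengthen the forcing condition to some $r$ that decides $\dot f(\dot a_{p,n})=\dot a_{r_0,m}$; the range condition forces $r_0\preceq q$.

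The main step is then a standard homogeneity argument. Assume towards contradiction that $p\not\preceq q$, so $r_0\neq p$. Choose $n'\neq n$ with $(p,n')\notin E\cup\supp r$ and let $\pi\in\sG_M\wr S_\kappa$ be the transposition of $(p,n)$ and $(p,n')$ in the $p$-fibre, acting trivially in the $\kappa$-coordinate. Then $\pi$ fixes $E\times B$ pointwise (since $(p,n),(p,n')\notin E$, and $\pi_m=\id$ for every $m$), so $\pi\dot f=\dot f$; meanwhile $\pi\dot a_{p,n}=\dot a_{p,n'}$ while $\pi\dot a_{r_0,m}=\dot a_{r_0,m}$ (the latter because $r_0\neq p$ places $(r_0,m)$ outside the orbit of $\pi$). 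By the choice of $n'$, the shifted condition $\pi r$ is compatible with $r$, and their common extension forces both $\dot f(\dot a_{p,n})=\dot a_{r_0,m}$ and $\dot f(\dot a_{p,n'})=\dot a_{r_0,m}$, contradicting injectivity. Hence $p\preceq q$ and the equivalence is proved.

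The delicate point is the calibration of $(M,\sG_M,\sF_M)$. It must be rich enough that each $A_p$ is $\sF_M$-stable so that $X_p$ survives into the symmetric extension, yet rigid enough between $P$-fibres that the swap argument above isolates precisely the comparison $p\preceq q$. The above choice does both: the group acts richly within each fibre $\{p\}\times\omega$ but never moves the $P$-coordinate, so the index $p$ attached to $\dot a_{p,n}$ is preserved under every symmetry and visible to the forcing relation. The remaining technicalities, namely the computation $\pi\dot a_{p,n}=\dot a_{p,n'}$ and the compatibility of the shifted conditions, are exactly the standard manipulations already carried out in the proof of \autoref{thm:sym-copy}.
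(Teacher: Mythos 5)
Your approach is essentially the paper's, realised through a slightly redundant construction. The paper takes $M=A$ (the carrier of the partial order) with trivial $\sG_M=\{\id\}$, letting the $S_\kappa$-layer that \autoref{thm:sym-copy} builds in supply the incomparable symmetric fibres $\dot X_a=\{\dot x_{a,\alpha}\mid\alpha<\kappa\}^\bullet$, and then takes unions $\dot X_B=\bigcup_{a\in B}\dot X_a$ to realise the inclusion order on $\power(A)$; the direction $|\dot X_B|\leq|\dot X_C|\Rightarrow B\subseteq C$ is delegated to the incomparability theorem (\autoref{thm:incomp}) rather than re-proved. You instead set $M=P\times\omega$ with $\sG_M=\{\id\}\wr S_\omega$, which produces incomparable fibres $B_p=\{\dot a_{p,n}\mid n<\omega\}$ for the same purpose, but this inserts an extra $\omega$-layer on top of the $\kappa$-layer that \autoref{thm:sym-copy} already provides, and you then carry out the swap argument by hand. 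Both routes embed $P$ into the inclusion order on its downward-closed subsets and are correct, but the paper's choice $\sG_M=\{\id\}$ is leaner: the machinery you need is already inside \autoref{thm:sym-copy} and \autoref{thm:incomp}, with no need for a further wreath layer.

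One concrete loose end in your compatibility step: the demand that $(p,n')\notin E\cup\supp r$ cannot always be met. The support of $r$ is a $<\kappa$-sized subset of $M\times\kappa$, and since $|M|<\kappa$, its projection to the $M$-coordinate may well cover all of $\{p\}\times\omega$, leaving no admissible $n'$. The fix is the Homogeneity Lemma: take $n'$ only outside the finite set $E$, let $\pi$ be the transposition, and then compose with a $\tau\in\fix(E\times B)$ with $\tau^*=\id$ making $\tau\pi r$ compatible with $r$; such $\tau$ exists by shifting the $\kappa$-coordinates above $\sup B$. Since $\tau$ is trivial on $M$ it still sends $\dot a_{p,n}$ to $\dot a_{p,n'}$ and fixes $\dot a_{r_0,m}$, so the contradiction with injectivity goes through. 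You gesture at this as a ``standard manipulation'', but as written the naked transposition does not suffice and the appeal to homogeneity needs to be made explicit.
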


Or the theorem of Monro, related to Dedekind-finite sets.

\begin{theorem*}[Monro, \S3 of \cite{Monro:1975}]
Given any infinite cardinal $\kappa$, there is a symmetric extension in which $\kappa$ is the image of a Dedekind-finite set.
\end{theorem*}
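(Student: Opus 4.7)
My plan is to realise the theorem by a single-step symmetric extension, directly modelled on the second step of the construction in \autoref{sec:morris}. Starting from a $\ZFC$ model containing $\kappa$, take $\PP=\Add(\omega,\kappa\times\omega)$, the standard forcing adding $\kappa\times\omega$ many Cohen reals with finite support; for each $(\xi,n)$ let $\dot x_{\xi,n}$ denote the canonical name for the $(\xi,n)$th Cohen real. Let $\sG=\{\id\}\wr S_\omega^{\fin}$ act on $\PP$ by permuting the index set $\kappa\times\omega$ so that each $\xi$-fibre $\{\xi\}\times\omega$ is preserved setwise and moved only by a finitary permutation of $\omega$. Let $\sF$ be the normal filter of subgroups generated by the pointwise stabilisers $\fix(E)$ for $E\in[\kappa\times\omega]^{<\omega}$.

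Define the $\HS$-names $\dot B=\{\dot x_{\xi,n}\mid(\xi,n)\in\kappa\times\omega\}^\bullet$ and $\dot f=\{\tup{\dot x_{\xi,n},\check\xi}^\bullet\mid(\xi,n)\in\kappa\times\omega\}^\bullet$. Any $\pi\in\sG$ preserves the first coordinate, so $\pi$ merely permutes $\{\dot x_{\xi,n}\mid n<\omega\}$ within itself for each $\xi$; therefore $\pi\dot B=\dot B$ and $\pi\dot f=\dot f$, placing both names in $\HS$ outright. Since distinct Cohen reals are forced to be distinct, $\dot f$ is interpreted as a well-defined function from $\dot B$ onto $\kappa$ inside the symmetric extension.

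The crucial step is showing that $\dot B$ is Dedekind-finite. Suppose towards contradiction that $\dot g\in\HS$ and $p\forces^\HS\dot g\colon\check\omega\to\dot B$ is injective, and fix $E\in[\kappa\times\omega]^{<\omega}$ with $\fix(E)\subseteq\sym(\dot g)$. Using that $E$ is finite while the range of $\dot g$ is infinite, extend $p$ to some $q$ deciding $\dot g(\check m)=\dot x_{\xi,n}$ for specific $m<\omega$ and $(\xi,n)\notin E$. Choose $n'$ with $(\xi,n')\notin E\cup\supp(q)$ and let $\pi$ be the transposition swapping $(\xi,n)$ and $(\xi,n')$ (and identity elsewhere). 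Then $\pi\in\fix(E)$, so $\pi\dot g=\dot g$, and the Symmetry Lemma yields $\pi q\forces\dot g(\check m)=\dot x_{\xi,n'}$. The conditions $q$ and $\pi q$ are compatible, since they agree outside the two transposed coordinates and at each of those two coordinates one of the two values is trivial; any common extension then forces both $\dot g(\check m)=\dot x_{\xi,n}$ and $\dot g(\check m)=\dot x_{\xi,n'}$, contradicting that distinct Cohen reals are forced distinct.

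The only mildly delicate ingredient is the compatibility computation for $q$ and $\pi q$, which is immediate once the action of $\sG$ on $\PP$ is spelled out; no iteration machinery is required as this is a single-step symmetric extension. If one also wished to preserve an arbitrarily large initial segment of $V$, one could replace $\Add(\omega,\cdot)$ by $\Add(\mu,\cdot)$ for a sufficiently large regular $\mu$ and invoke \autoref{lemma:folk-distrib} together with \autoref{thm:add-k-dist}, but for the bare statement of Monro's theorem the Cohen-real construction above suffices.
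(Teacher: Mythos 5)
The paper states Monro's theorem as a cited result without giving its own proof, so there is no in-text proof to compare against; but your construction is correct and is essentially both the classical argument and the one the paper itself uses for the analogous Dedekind-finiteness claim in \autoref{sec:morris} (the Proposition that $\dot B$ is Dedekind-finite): a one-step symmetric system over $\Add(\omega,\kappa\times\omega)$ with the wreath group $\{\id\}\wr S_\omega^{\fin}$ fixing each $\xi$-fibre setwise and the filter generated by finite pointwise stabilisers, followed by the standard swap-and-amalgamate contradiction. Your compatibility computation for $q$ and $\pi q$ is handled correctly; the only micro-detail worth adding is the one-line remark that $\dot f$ is forced to be single-valued because distinct coordinates yield pairwise distinct Cohen reals, so $\dot f$ really names a surjection onto $\kappa$.
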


In all of these examples we can easily construct a model where the conclusion holds for all the ground model sets. In the case of the Jech--Takahashi theorem, this was done by the author in \cite{Karagila:Embeddings}, and in the case of Monro's theorem, it was done by Monro himself in \S4 of the same paper. To some extent, Monro's original result and the author's \cite{Karagila:Morris} are in the same boat as these, where the author's work separate the load into a local version first to make this more explicit.

An obvious question, in all of these constructions, is whether or not we can saturate the universe with counterexamples. For example, given any field, is there already in our current universe a (non-trivial) vector space over the field whose endomorphisms are all scalar multiplications? Or, given a partial order, is it realised as cardinals in the model itself?\footnote{See \autoref{sec:oopsie} for some historical details and overview of this specific problem.} After all, just because we managed to capture all the sets from the ground model, it does not mean that we managed to capture the new sets. In the example of L\"auchli's result this is most striking. Whereas the embedding of partial orders into the cardinals and Monro's results can be argued to be of very ``pure set theoretic flavour'' and thus could possibly follow from some additional set theoretic axioms which may hold true in the models where all the well-orderable partial orders can be embedded into the cardinals, there is absolutely no reason to believe that such additional axioms can tell us anything about the existence of these odd vector spaces, or lack thereof.

Using the technique we described in this paper, these have a simple and positive answer. Simply iterate the symmetric systems where at each step we deal with all the fields, partial orders, what have you, which lie in $V_\alpha$ of the current model. Due to its iterative nature, we get more than just before, indeed each counterexample is added repeatedly, in all stages after it first appeared, so there is a proper class of ``different'' realisations of a partial order as cardinals; and over each field we have a proper class of pairwise non-isomorphic vector spaces, all of which satisfy the property ``every endomorphism is a scalar multiplication''.\footnote{To be technically correct, both of these will satisfy ``for every set of counterexamples, there is one that is not isomorphic to any of them'', since the term class refers to a definable class, and we simply do not know whether or not this collection of counterexamples is definable. We will use the term ``proper class of'', simply because it simplifies readability.}

We list here a few ``localised constructions'' for structural counterexamples. The idea is that we can combine them by taking the finite support product of these symmetric systems, where at each stage we add all counterexamples for all relevant objects which have rank $<\alpha$, using the relevant $\kappa$ at each stage. Using all the results from \autoref{sec:folk} we will immediately have that these counterexamples are all of different cardinality.

We will not prove all the details, but when things are not straightforward, we will sketch the ideas. We will rely on \autoref{thm:sym-copy} and will always assume that $\kappa$ used here is a suitable one.

\subsection{Partial orders as cardinals}
Given any partial order $(A,\leq)$ we want to realise it as cardinals, namely for every $p\in A$ we want to attach a set $X_a$ such that $|X_a|\leq|X_b|$ if and only if $a\leq b$. Since every partial order embeds into its power set ordered by inclusion,\footnote{$a\mapsto\{b\in A\mid b\leq a\}$.} we really want to embed $\power(A)$ into the cardinals.

Fix a set $A$, and consider $\PP$ as the symmetric copy of $A$ with the automorphism group $\{\id\}$ and $\sF=\{\{\id\}\}$. In other words, we are taking a product of $|A|$-copies of $\Add(\kappa,\kappa)$ with the standard Cohen-like symmetric system.

We let $\dot x_{a,\alpha}$ be the name $\{\tup{p,\check\beta}\mid p(a,\alpha,\beta)=1\}$ and we let $\dot X_a=\{\dot x_{a,\alpha}\mid\alpha<\kappa\}^\bullet$. Finally, for $B\subseteq A$, let $\dot X_B=\bigcup\{\dot X_a\mid a\in B\}$.

Standard arguments, such as those that we saw in \autoref{thm:incomp}, show that $\forces^\HS|\dot X_B|\leq|\dot X_C|\iff\check B\subseteq\check C$.\footnote{These actually extend to new, symmetric subsets of $A$, but we choose $\kappa$ such that no such subsets are added anyway.} Indeed, this is an immediate consequence of the theorem, since the products of different sets will have mutually generic copies.
\subsection{Non-isomorphic and non-existent algebraic closures}
This wonderful result, also due to L\"auchli in his seminal paper \cite{Lauchli:1963}, proved that the field of rational numbers can have two non-isomorphic algebraic closures. The result was further studied by Hodges \cite{Hodges:1976}.

The idea is simple: the algebraic closure of the rationals has a very rich automorphism group, so we can add a symmetric copy of it which cannot be well-ordered. In particular, this copy cannot be isomorphic to the canonical algebraic closure. For this strategy to work we need to make sure that there is no finite subset of the, or rather \textit{some}, algebraic closure such that when fixing this finite subset that algebraic closure is fixed pointwise.

In the context of $\ZFC$ this means that we can do it except when the field is real- or algebraically-closed to begin with. However, as Hodges proved in his paper, the symmetric copy added by L\"auchli is rigid. It has no automorphisms to begin with. Of course, it is algebraically closed, and it is a closure of the rationals which still have the well-ordered copy. It is not clear, however, if any extension of L\"auchli's closure (e.g.\ by adding $\pi$ to it) will have an algebraic closure and whether or not this closure will be rigid over our field.

Let $F$ be a field with an algebraic closure that is infinite dimensional and not rigid, and let $\overline F$ be an algebraic closure of $F$ witnessing that. We add a symmetric copy of $\overline F$ with the automorphism group $\aut(\overline F/F)$, and with the filter of groups generated by finite subsets. The symmetric copy obtained will have a distinct cardinality of any ground model algebraic closure.

At the same time, L\"auchli also showed that it is consistent that some fields do not have an algebraic closure. This is done by adding a symmetric copy of $\omega$, using $S_\omega$ as the automorphism group, and generating the filter by fixing pointwise finite subsets of $\omega$.

As usual, we define $\dot x_{n,\alpha}=\{\tup{p,\check\beta}\mid p(n,\alpha,\beta)=1\}$ and $\dot a_n=\{\dot x_{n,\alpha}\mid\alpha<\kappa\}^\bullet$. We let $\dot A=\{\dot a_n\mid n<\omega\}^\bullet$ be the symmetric copy of $\omega$. If $\dot F\in\HS$ is a name for an algebraically field of characteristic $\neq 2$, such that $\dot A\subseteq\dot F$ then there is some finite $E$ such that any $\pi\in\fix(E)$ preserves $\dot F$ and its operations, and therefore induces a field automorphism. To see that, note that \[\dot\pi=\{\tup{\pi p,\tup{\pi\dot x,\pi\dot y}^\bullet}\mid p\forces\dot x,\dot y\in\dot F,\dot x,\dot y\text{ appear in }\dot F\}\] is a name in $\HS$ for the induced automorphism.

Since the field is algebraically closed it has solutions to $x^2-1=0$, call them $i$ and $-i$, and assume that $\fix(E)$ also fixes these two pointwise.

We can now ignore the forcing and just work in the symmetric extension. Taking $n,m$ such that $\{n,m\}\times\lambda\cap E=\varnothing$ and setting $z\in F$ such that $z^2=a_n-a_m$. Now let $\pi$ be the $2$-cycle $(n\ m)$, then the induced automorphism satisfies $\pi z^2 = -z^2$ and moreover $\pi^2=\id$. From $\pi z^2=-z^2$ we have that either $\pi z=iz$ or $\pi z=-iz$. But we took $\pi i=i$, so if $\pi z=iz$, then $z=\pi\pi z=\pi(iz)=\pi i\pi z=i^2 z=-z$.

\subsection{Failure of weak choice axioms}
We write $\AC_X^Y$ to denote the statement ``Every family of sets indexed by $X$ each of whom has cardinality $Y$ admits a choice function''. To violate $\AC_X^Y$ we add a symmetric copy of $X\times Y$ with the automorphism group given by $\{\id\}\wr S_Y$, and fixing an ideal of subsets $I$ on $X$ which contains the finite subsets, we let $\sF_I$ be generated by $\fix(E)$ for $E\in I$, where $\fix(E)=\{\pi\in\{\id\}\wr S_Y\mid\pi\restriction E\times Y=\id\}$.

Then in the symmetric extension we have $\dot a_{x,y,\alpha}=\{\tup{p,\check\beta}\mid p(x,y,\alpha,\beta)=1\}$, then $\dot u_{x,y}=\{\dot a_{x,y,\alpha}\mid\alpha<\kappa\}^\bullet$ and $\dot U_x=\{\dot u_{x,y}\mid y\in Y\}^\bullet$. The family $\{\dot U_x\mid x\in X\}^\bullet$ is hereditarily symmetric, and indeed the bijection with $X$ given by $\tup{\dot U_x\mid x\in X}^\bullet$ is hereditarily symmetric.

However one can easily check that a subset family of $\{U_x\mid x\in X\}$ admits a choice function if and only if its domain is in $I$. Using \autoref{thm:incomp} we also get that the set $\{\dot u_{x,y}\mid\tup{x,y}\in X\times Y\}^\bullet$ will have a cardinality different from all the ground model's counterexamples to $\AC_X^Y$.

\subsection{Nonabelian cohomology of an infinite set is non-trivial}
This failure was studied by Andreas Blass in \cite{Blass:1983} where he shows that $H^1(X,G)$ is trivial for any group $G$ if and only if $\AC_X$, that is $\forall Y\,\AC_X^Y$, holds. Here we follow Blass' decision to treat $X$ as a discrete space and consider Giraud's definition of nonabelian cohomology,\footnote{Blass argues that one can use any nonabelian cohomology theory which satisfies a few basic properties, but argues that Giraud's definition is the natural choice.} which is better suited to a choice-free setting. We consider $G$-torsors over $X$, i.e.\ pairs $\tup{T,p}$ where $T$ is a set and $p\colon T\to X$ is a surjection whose fibres are exactly the orbit of the free action of $G$ on $T$. We say that two $G$-torsors are isomorphic if there is a bijection which commutes with the $G$-action and the projection. Here $H^1(X,G)$ is the class of isomorphism classes of $G$-torsors.\footnote{Each isomorphism class can be represented a set by applying Scott's trick.} This may also be the place to mention that Blass' work generally assumes that $H^1(X,G)$ is a set, but it really just asks whether or not it is trivial.

Since $X\times G$ with the natural projection is a $G$-torsor, $H^1(X,G)$ is never empty, but it could very well be trivial if any two $G$-torsors are isomorphic, which happens exactly when every $p\colon T\to X$ admits a section. If, on the other hand, $\AC_X^Y$ fails, then there is a family of sets $\{Y_x\mid x\in X\}$ such that for each $x\in X$, $Y_x$ has a bijection to $Y$, but there is no choice function from the family. Blass shows that in that case $H^1(X,S_Y)$ is non-trivial.

Seeing how the fibres of the surjection on $X$ are not uniformly equipotent to $Y$, and since $Y$ may not even carry a group structure,\footnote{The axiom of choice is in fact equivalent to the statement ``Every non-empty set admits a group structure''.} we need to use $S_Y$ instead. Consider for each $x$, $T_x$ as the set of bijections between $Y$ and $Y_x$, then $S_Y$ has a natural right action on $T_x$. This eliminates the need for choosing a bijection on each fibre, and if this $G$-torsor is indeed isomorphic to $X\times S_Y$, this defines a choice of bijections between $Y_x$ and $Y$ from which we can choose from the original family.

Note that iterating the construction for the failure of $\AC_X^Y$, as per \autoref{thm:incomp}, introduces new elements to $H^1(X,S_Y)$. More generally, $G$ acts on $X\times G$ in the natural way which makes it a $G$-torsor. Assuming that $G$ is infinite, a symmetric copy of $X\times G$ with $G$ as the automorphism group and stabilisers of finite subsets will satisfy the requirement of \autoref{thm:incomp}.

\subsection{Endormorphisms of vector spaces}
Suppose that $F$ is a given field, consider the $|X|$-dimensional vector space $F^{(X)}$, where $X$ is any infinite set. Add a symmetric copy of $F^{(X)}$ with $\aut(F^{(X)})$ as the automorphism group, and for some ideal of subspaces $I$, let $\sF_I$ be generated by pointwise stabilisers of vector spaces in $I$.\footnote{As usual, we will assume that all finite dimensional subspaces are in $I$.}

Let $K$ denote the symmetric copy of the vector space, then a subspace of $K$ is either $K$ itself, or it is isomorphic to one of the spaces in $I$. Moreover, assuming that the ideal is a proper ideal, if $T\colon F^{(X)}\to F^{(X)}$ is a linear operator, then $T$ has a symmetric copy if and only there is a $W\in I$ such that whenever $S\restriction W=\id$, $S\circ T=T\circ S$. It is not hard to see that if such $W$ exists, then $Tw\in W$ for all $w\in W$. We also note that $\ker T$ and $\operatorname{im}T$ are subspaces which are definable from $T$, so they must be in $I$ as well.

If $W$, and indeed every subspace in $I$, is such that for any $u,v$ which are linearly independent over $W$ there is some automorphism $S$ such that $Su=v$ and $S\restriction W=\id$, then we can show that $Tv\in W$ implies that $Tv=0$ for all $v\notin W$. From this we can conclude that $Tv=f\cdot v$ for some $f\in F$, although which $f$ may depend $v$ at this point. However, if $v,u$ are two vectors which are linearly independent over $W$ and $f_u,f_v,f_{u+v}$ are the three scalars matching the three vectors, then \[0=T(u+v-(u+v))=f_uu+f_vv-f_{u+v}(u+v)=(f_u-f_{u+v})u+(f_v-f_{u+v})v,\]and by linear independence we get that $f_u=f_{u+v}=f_u$. Finally, since $I$ is a proper ideal, $W$ does not have a complement, and therefore $V\setminus W$ must span the whole space, so $T$ is a scalar multiplication.

To find an example of an ideal where these conditions hold we need to look no further than taking $X=\omega$ and $I$ to be the finitely dimensional subspaces, which is generated by the subspaces spanned by a finite subset of the canonical countable basis.

\subsection{More general structures}
The examples above are just the tip of the iceberg in terms of what we can do and what can fail in the absence of $\AC$. Indeed, we have many theorems about rings, vector spaces, and we can vary the aforementioned ideals of subsets or subspaces to get all sort of examples for intermediate failures.

More broadly, we can say that a failure of the axiom of choice is \textit{structural} if it does not depend on the specific sets which encode it. So the failure of ``Every family of finite sets admits a choice function'' is a structural statement, whereas ``Every limit ordinal has countable cofinality'' or ``Every set of reals is Lebesgue measurable'' are not, since they talk about a specific type of sets. And while we can add a symmetric copy of the real numbers which will satisfy that every set is countable or contains a perfect subset, this will not be the real numbers themselves, so this is not quite what we want to do if we work in set theory or in a context where the failure is not structural.

In other words, $\varphi$ is a structural failure of the axiom of choice, if $\varphi$ can be falsified by adding a symmetric copy of a set \`a la \autoref{thm:sym-copy}. So we will refer to such a symmetric system as a structural one.

\subsection{Morris coding}
The above definition seems to exclude the Morris' model constructed in the previous section. Indeed, the structural part of the Morris model is adding an uncountable set which is a union of countably many countable sets.

We say that a function $F\colon V\to V$ is \textit{rank bounded} if there is some $\eta$ such that whenever $x\in V_\alpha$, $F(x)\in V_{\alpha+\eta}$.\footnote{For example, $F(x)=\power(x)$ is rank bounded, but $F(x)=V_{\alpha+\alpha}$, when $\alpha$ is the von Neumann rank of $x$, is not.}

\begin{definition}
  We say that a structural statement $\varphi$ is \textit{Morris codable} if for all $\alpha$ we can add a structural failure $A_\alpha$ such that for some rank bounded $F$
  \begin{enumerate}
  \item $F(A_\alpha)$ can be mapped onto $\omega_\alpha$ (or even $V_\alpha$).
  \item $\varphi$ proves that $A_\alpha\cong A_\beta$, for the appropriate notion of $\cong$.
  \end{enumerate}
\end{definition}

The previous section shows that ``countable union of countable sets is countable'' is Morris codable with $F(x)=\power(x)$.

\begin{corollary}
If $\varphi$ is a Morris codable statement, then it is consistent with $\ZF$ that ``$V\models\lnot\varphi$ and any outer model of $V$ in which $\varphi$ holds must have new ordinals.''\qed
\end{corollary}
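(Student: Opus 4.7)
The plan is to carry out a class-length iteration of symmetric extensions in which, at stage $\alpha$, we add a structural failure $A_\alpha$ of $\varphi$ together with (its rank-bounded companion) $F(A_\alpha)$ and a surjection $g_\alpha\colon F(A_\alpha)\twoheadrightarrow\omega_\alpha$, exactly as the Morris codability of $\varphi$ provides. This is the direct analogue of the construction in \autoref{sec:morris}, with the local two-step system (for the ``countable union of countable sets'' statement) replaced by whatever structural symmetric system, as per \autoref{thm:sym-copy}, witnesses $\neg\varphi$ at stage $\alpha$.

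First I would set up the iteration recursively: having built $\PP_\alpha,\cG_\alpha,\cF_\alpha$ and the interpretation of $A_\beta, F(A_\beta), g_\beta$ for $\beta<\alpha$, choose $\kappa_\alpha$ large enough (as in \autoref{thm:add-k-dist}) so that the local symmetric system producing $A_\alpha$ and $g_\alpha$ adds no sets of rank below some bound that dominates all earlier data. Since each local step is of the Cohen-like shape analysed in \autoref{lemma:folk-distrib} and \autoref{thm:sym-copy}, the distributivity argument of \autoref{prop:morris-dist} applies uniformly. By \autoref{thm:preservation} together with Theorem~9.2 of \cite{Karagila:Iterations}, the resulting class-length iteration preserves $\ZF$ and every $A_\alpha$ together with its surjection $g_\alpha$ survives into the final model $V$. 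In particular, $V\models\lnot\varphi$ since each $A_\alpha$ is a structural counterexample that is never ``repaired'' at later stages.

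Now suppose toward contradiction that $W\supseteq V$ is an outer model with $\Ord^W=\Ord^V$ and $W\models\varphi$. By Morris codability, in $W$ we have $A_\alpha\cong A_\beta$ for all $\alpha,\beta$, and since $F$ is a uniformly defined (and hence isomorphism-preserving) rank-bounded operation, $F(A_\alpha)\cong F(A_0)$ in $W$ for every ordinal $\alpha$. The surjections $g_\alpha\colon F(A_\alpha)\twoheadrightarrow\omega_\alpha$ already live in $V\subseteq W$; composing with the $W$-isomorphism yields, inside $W$, a surjection from the single set $F(A_0)$ onto $\omega_\alpha$ for every ordinal $\alpha$ (here I use $\omega_\alpha^V=\omega_\alpha^W$, which follows from $\Ord^V=\Ord^W$ at the level of the underlying ordinal). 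This contradicts Hartogs' theorem in $W$, which is a theorem of $\ZF$ and caps the ordinals onto which a fixed set can surject.

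The main obstacle is the bookkeeping: verifying that $\kappa_\alpha$ can be chosen at each stage so that (i) no later stage disturbs $F(A_\alpha)$ or destroys $g_\alpha$, and (ii) the increasing-distributivity hypothesis of the class-length preservation theorem is maintained throughout. Point (i) relies on the rank-boundedness of $F$, which places $F(A_\alpha)$ and $g_\alpha$ at a bounded rank above $A_\alpha$ and lets us pick $\kappa_\beta$ at later stages above that bound, invoking \autoref{lemma:folk-distrib}. Point (ii) is the standard strictly increasing-$\kappa$ trick used already in \autoref{sec:morris}. The only mildly subtle ingredient is that the appropriate notion of isomorphism under which $A_\alpha\cong A_\beta$ is also respected by $F$; this is implicit in the definition of Morris codability and holds in the motivating examples such as $F=\power$.
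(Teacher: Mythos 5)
Your proof is correct and follows exactly the route the paper intends; the paper states this corollary with a bare \verb|\qed| because it is meant to follow directly from the Morris-model construction of \autoref{sec:morris} together with the definition of Morris codability, and you have reconstructed that argument faithfully: iterate the local structural systems so that each $A_\alpha$, its rank-bounded companion $F(A_\alpha)$, and the surjection $g_\alpha\colon F(A_\alpha)\twoheadrightarrow\omega_\alpha$ survive into the final model, then in an outer model $W\supseteq V$ with $\Ord^W=\Ord^V$ satisfying $\varphi$, use the isomorphisms $A_\alpha\cong A_0$ that $\varphi$ provides to push a single $F(A_0)$ onto cofinally many ordinals and contradict Hartogs.

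Two small points worth tightening. First, the parenthetical claim that $\omega_\alpha^V=\omega_\alpha^W$ is neither needed nor generally true, since $W$ may collapse cardinals; what you actually use is only that $\{\omega_\alpha^V\mid\alpha\in\Ord\}$ is an unbounded class of ordinals in $W$, which is immediate from $\Ord^V=\Ord^W$. Since the surjections $g_\alpha$ already live in $V\subseteq W$ and target these ordinals, that unboundedness is all that Hartogs (applied to $\power(F(A_0))$) requires. Second, your observation that $F$ must respect the relevant $\cong$-relation for the argument to go through is a fair reading of the paper's definition of Morris codability, which states this only implicitly; in the motivating case $F=\power$ and $\cong$ is equipotence, so it is automatic, but you are right to flag it as a tacit hypothesis.
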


It is not clear whether or not every structural failure is Morris codable, but we clearly see that statements of the form $\AC^Y_X$ are Morris codable when $X$ is infinite and $|Y|>1$ and both are well-orderable, since the argument is essentially the same as the one in the Morris model.

\subsection{Dragons}
Let $V$ be a model of $\ZFC$. We iterate symmetric systems as follows: in the $\alpha$th step we take the finite support product of all structural symmetric systems in $V_\alpha$ of the intermediate model so far, using the smallest suitable $\kappa$ possible; where appropriate we also iterate a symmetric system by its Morris coding, provided that the bound in the rank bounding is at most $\alpha+1$. Since the definition is uniform, the symmetric systems satisfy the conditions for the symmetric iteration.\footnote{This would not be the case if we were to try and be minimal about it, e.g.\ choose a single field of each isomorphism class and add a vector space over that field.}

Let $M$ be the model obtained by this iteration. In this model every structural failure repeats itself unboundedly often. Some of the statements that hold in this model are:
\begin{enumerate}
\item If $F$ is a field that has a non-rigid, infinite dimensional algebraic closure, then it has a proper class of non-isomorphic algebraic closures.\footnote{This, in particular, includes all ground model fields.}
\item If $F$ is a field, then $F$ has a vector space which has two linearly independent vectors, but there is no endomorphism of this vector space which is not scalar multiplication.
\item If $P$ is a partial order, then $P$ is realised by cardinals.
\item If $X$ and $Y$ are non-empty sets such that $X$ is infinite and $Y$ has at least two elements, then there is a proper class of non-equipotent sets which can be mapped onto $X$ with each fibre having exactly $|Y|$ elements, but there is no choice function from any infinitely many fibres. In particular, $H^1(X,S_Y)$ is a proper class for any $S_Y$. Indeed, for any reasonably interesting group $G$, repeatedly adding symmetric copies of $X\times G$ will result in $H^1(X,G)$ being a proper class for any infinite $G$ or $G$ of the form $S_Y$ for $|Y|>1$.
\item If $X$ is any set, then there is a Dedekind-finite set that can be mapped onto $X$.
\item If $X$ is any non-empty set, then there is a set $A$ which is a countable union of countable sets such that $\power(A)$ can be mapped onto $X$.
\item Every type of set which can be added via a symmetric extension (e.g., amorphous sets or a Dedekind-finite countable union of pairs) has a proper class of non-equipotent examples.
\end{enumerate}

And more generally, if $\varphi$ is a structural failure that can be forced by a symmetric system over $M$, then this symmetric system exists in some $V_\alpha$, and therefore by the $\alpha$th step we began introducing this failure to $M$. In particular $M$ is saturated, in the sense that any failure that can happen in some symmetric extension will already happen in $M$.

\section{Further research}
The model $M$ we describe is in some sense the ultimate model of $\lnot\AC$. However, it does miss a lot of the set theoretic failures which are quite interesting. Things such as successor cardinals which are singular, or that the real numbers are a countable union of countable sets, or that every uncountable set of reals has a regularity property (e.g., Lebesgue measurable, Baire property, the perfect set property). These are all interesting failures of the axiom of choice which need not occur in $M$. Indeed, we can insist to begin our construction above the reals so that they remain well-ordered, and any other set which is used in classical analysis remains well-ordered. In that sense, $M$ can be seen as a great separator between structural mathematics, which is the hallmark of modern approaches such as category theory, and the more ``hands-on'' mathematics that deals with specific structures such as analysis.

But we can still incorporate some failures into $M$ by beginning with a symmetric extension that turns the reals into a countable union of countable sets, for example, we can obtain this failure along with the rest of them. Perhaps most interestingly, since $L(\RR)$ is always a symmetric extension of a model of $\ZFC$, that means that we can incorporate the symmetric extension from some inner model $W$ as our first step. If $L(\RR)$ satisfies the Axiom of Determinacy, or $\AD$, then this will be captured by our structural iteration as well.

One structural failure of interest is that of Kinna--Wagner Principles. We discuss them in more length in \cite{Karagila:Iterations}, but in short we say that $\KWP_\alpha$ holds if for every set $X$ there is an ordinal $\eta$ such that $X$ injects into $\power^\alpha(\eta)$; we write $\KWP$ to mean $\exists\alpha\,\KWP_\alpha$. Note that $\KWP_0$ is just the axiom of choice, and that $\KWP_1$ implies that every set can be linearly ordered. It is not hard to find structural systems which violate $\KWP_n$ for any $n$ (simply consider $\omega^n$ with $S_\omega\wr\dots\wr S_\omega$ and finite supports). But it is not immediately obvious how to introduce higher failures of $\KWP_\alpha$. Of course, if that is at all possible, then $M$ will detect it and satisfy $\lnot\KWP$.

Another obvious refinement is Dependent Choice, or $\DC$. The statement $\DC_\kappa$ is ``every $\kappa$-closed tree has a chain of type $\kappa$'', and $\DC$ simply means $\DC_\omega$. We also write $\DC_{<\kappa}$ to read $\forall\lambda<\kappa,\DC_\lambda$. In the context of symmetric extensions, if $\PP$ is $\kappa$-distributive or $\kappa$-c.c.\ and $\sF$ is $\kappa$-complete, then $\DC_{<\kappa}$ is preserved (see \cite{Karagila:DC} and \cite{Banerjee:2020} for details), which means that in principle if a reasonable $\kappa$-support iteration framework for symmetric extensions is developed, then any structural failure compatible with $\DC_{<\kappa}$ can fail as well.

It should be noted that in this specific case this is likely to be fairly straightforward, since all the forcings can be taken as very closed. Nevertheless, the details need to be written down and checked before such claim can be verified.

Of course, this opens the door for questions about $\AC_\WO$, $\BPI$, and other choice principles. Although for these we do not have very good preservation theorems that make it easy to guarantee that a symmetric system will preserve them. So further work is sorely needed in these fronts. As well as the preservation of large cardinal axioms, which themselves have structural consequences on the universe. For example, if $\delta$ is a supercompact cardinal, then there is a generic extension which satisfies $\DC$. This means that any type of Morris coding which violates $\DC$ must destroy supercompactness. But perhaps it is possible to get some type of structural failures while preserving supercompactness, or at least some other large cardinal notions, nevertheless?

\subsection{Outside of set theory}
As we remarked in the case of algebraic closures, it is not entirely obvious that every field has an algebraic closure, and that every algebraic closure is finite dimensional over the base field or else it has a rich automorphism group. This raises some questions on this subject in $\ZF$.

\begin{question}
  Suppose that $F$ is a field which has an algebraic closure that is infinite dimensional. Does $F$ have an algebraic closure which has a rich automorphism group?
\end{question}

We also see that if we could add a symmetric copy of an algebraic closure once, this will happen a proper class of times. We can of course exclude a single field from the construction, e.g.\ only add symmetric copies of algebraic closures of fields that are not the rational numbers, or perhaps only add such algebraic closure during the first $\omega_1$ steps of the iteration.
\begin{question}
  Suppose that $F$ is a field that has two non-isomorphic algebraic closures. Is there a third? Are there infinitely many?
\end{question}

More generally, suppose that $\forall X\exists Y\psi(X,Y)$,\footnote{In the questions above, $\psi$ is ``the algebraic closure is either finite dimensional or has a rich automorphism group''. But this can be understood in many different ways.} is a structural statement. Under what conditions does it remain structural in $\ZF$? Moreover, when is a structural failure Morris' codable? Is there a structural criterion for that?

\begin{question}
  Suppose that $G$ is a finite group, then $G$ embeds into $S_G$. We know that $H^1(X,S_G)$ is a proper class. Can we use that to argue that $H^1(X,G)$ is a proper class as well?
\end{question}

\providecommand{\bysame}{\leavevmode\hbox to3em{\hrulefill}\thinspace}
\providecommand{\MR}{\relax\ifhmode\unskip\space\fi MR }
\providecommand{\MRhref}[2]{%
  \href{http://www.ams.org/mathscinet-getitem?mr=#1}{#2}
}
\providecommand{\href}[2]{#2}

\appendix
\section{History of realising partial orders as cardinals}\label{sec:oopsie}
Hartogs' theorem states that if every two cardinals are comparable, then the axiom of choice must be true. Therefore, if the axiom of choice fails, there are incomparable cardinals. Tarski proved that if there is a finite bound on the size of cardinal antichains, then the axiom of choice must be true.\footnote{This was published in Notices of the American Mathematical Society \textbf{11} (1964), 64T-348.} The result is relatively unknown, and so in \cite{FeldmanOrhonBlass:2008} it was reworked by David Feldman and Mehmet Orhon, who sent it to Andreas Blass who then offered a different argument, appearing in the appendix (Blass' argument is essentially that of Tarski's).

The natural question, of course, is how bad can the order of the cardinals get when the axiom of choice fails. Jech constructed a permutation model where given a fixed partial order we can embed it into the cardinals \cite{Jech:ORD1966}, using the transfer theorem from Jech--Sochor \cite{JechSochor:1,JechSochor:2} the result also holds in $\ZF$. Independently, Takahashi showed the same result, constructing a model of $\ZF$ directly using symmetric extensions \cite{Tahakashi:1968}.

Even later than that, Roguski showed that there is a class of pairwise incomparable cardinals \cite{Roguski:1990},\footnote{Much like our use in the paper, the result is that for every set $A$ of pairwise incomparable sets, there is a set $X$ such that $|X|\neq|a|$ for any $a\in A$.} although these results can be inferred from the work of Monro \cite{Monro:1975}.

The author's first publication \cite{Karagila:Embeddings} included improving slightly upon all of these results by embedding all the ground model partial orders and preserving $\DC_\kappa$ for any fixed $\kappa$. But the question of whether or not all partial orders can be embedded into the cardinals in the same model left unanswered there due to the missing methods which are covered in this very work.

More recently, Eilon Bilinsky informed us that he is developing a new technique for obtaining choiceless results and that he obtained a model in which all partial orders are realised \cite{Bilinsky}, as we also have obtained a similar result after developing the framework of iterating symmetric extensions.

However, while revising this paper we found out that the problem was in fact solved by Forti and Honsell in \cite{FortiHonsell:1985a} using atoms, and then in \cite{FortiHonsell:1985b} by considering what is essentially Monro's model in \cite{Monro:1975}. The two papers were never cited or referred to in any literature that we know of, prior to this mention. Furthermore, it seems that a small modification of their construction will also allow us to preserve $\DC_{<\kappa}$ for some $\kappa$.

All of the proofs mentioned here rely on the basic fact: $\power(A)$ is a universal partial order for any partial ordering on a subset of $A$. Therefore, it is enough to show that for any set $A$, there is a set $|A|$ of ``very incomparable'' cardinals, so that we can embed $\power(A)$ using these cardinals. Forti and Honsell also use a somewhat more ad-hoc argumentation based on the construction of the model.

Some of these papers also deal with the preorder $\leq^*$ which is defined by surjections, and we have certainly omitted some other papers which mention this problem or deal with questions about antichains of cardinals.
\bigskip
\end{document}